\def\ps@pprintTitle{%
  \let\@oddhead\@empty
  \let\@evenhead\@empty
  \def\@oddfoot{\reset@font\hfil\thepage\hfil}
  \let\@evenfoot\@oddfoot
}
\def\PS{\includegraphics[scale=0.3, bb = 5 8 39 33]{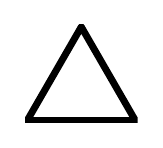}}
\def\PSA{\includegraphics[scale=0.3, bb = 5 8 39 33]{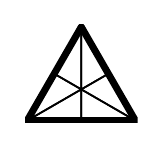}}
\def\PSB{\includegraphics[scale=0.3, bb = 5 8 39 33]{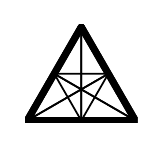}}
\def\PSC{\includegraphics[scale=0.3, bb = 5 8 39 33]{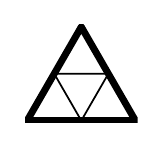}}
\def\RR{\mathbb{R}}
\def\mm{\mathbf{m}}
\def\nn{\mathbf{n}}
\def\tt{\mathbf{t}}
\def\uu{\mathbf{u}}
\def\vv{\mathbf{v}}
\def\ww{\mathbf{w}}
\def\BBB{\mathcal{B}}
\def\SSS{\mathcal{S}}
\newcommand{\norm}[1]{\lVert#1\rVert}
\def\ud{\text{d}}
\def\Sage{\texttt{Sage}}
\def\ip#1#2{\langle #1, #2\rangle}
\newtheorem{theorem}{Theorem}
\theoremstyle{definition}
\newtheorem{example}{Example}
\theoremstyle{remark}
\newtheorem{remark}[theorem]{Remark}
\begin{document}

\title{A Hermite interpolatory subdivision scheme\\ for $C^2$-quintics on the Powell-Sabin 12-split}

\author[a]{Tom Lyche}
\ead{tom@ifi.uio.no}
\author[b]{Georg Muntingh}
\ead{georg.muntingh@gmail.com}

\address[a]{Department of Mathematics, University of Oslo, PO Box 1053, Blindern, 0316 Oslo, Norge}
\address[b]{SINTEF ICT, PO Box 124 Blindern, 0314 Oslo, Norge}

\date{\today}

\begin{abstract}
In order to construct a $C^1$-quadratic spline over an arbitrary triangulation, one can split each triangle into 12 subtriangles, resulting in a finer triangulation known as the Powell-Sabin 12-split. It has been shown previously that the corresponding spline surface can be plotted quickly by means of a Hermite subdivision scheme \cite{DynLyche98}. In this paper we introduce a nodal macro-element on the 12-split for the space of quintic splines that are locally $C^3$ and globally $C^2$. For quickly evaluating any such spline, a Hermite subdivision scheme is derived, implemented, and tested in the computer algebra system \Sage. Using the available first derivatives for Phong shading, visually appealing plots can be generated after just a couple of refinements.
\end{abstract}

\maketitle

\section{Introduction}
\noindent For approximating functions on a given domain, a popular method is to triangulate the domain and consider an approximation in a space $\SSS$ of piece-wise polynomials over the triangulation. It is a hard problem to find a basis $\BBB$ of $\SSS$ that has all the usual properties of the univariate B-splines.

One desired property of $\BBB$ is that it is \emph{local}, meaning that each spline in $\BBB$ has local support. One way to construct such a local basis is to first split each triangle into several subtriangles, and then construct a basis on the refined triangulation.

A popular split is the Powell-Sabin 12-split \cite{Powell.Sabin77}; see Figure \ref{fig:PS12-Labels12} and Section~\ref{sec:macroelement}. While the 12-split splits the triangle in a relatively large number of subtriangles, a major advantage over other well-known splits stems from the following property \cite{Oswald92, DynLyche98}. Let be given a triangle $\PS$, its 12-split $\PSB$, and the split $\PSC$, where we subdivided $\PS$ into four subtriangles by connecting the midpoints of the edges. If we replace each subtriangle in $\PSC$ by its 12-split, the space of splines over the resulting split contains the space of splines over $\PSB$. This refinability property makes the 12-split suitable for multiresolution analysis.

Recently, a simplex spline basis for the $C^1$-quadratics on the 12-split with all the usual properties of the univariate B-spline basis was discovered \cite{Cohen.Lyche.Riesenfeld13}. Powell and Sabin originally constructed a nodal basis (see Section \ref{sec:BernsteinBezier}) on the 12-split, which can be used to represent $C^1$-smooth quadratic splines over arbitrary triangulations. Schumaker and Sorokina viewed the space of $C^1$-quadratics on the 12-split as the first entry in a sequence of spline spaces of increasing smoothness and degree \cite{Schumaker.Sorokina06}. The second entry is a space of $C^2$-quintics, with $C^3$-supersmoothness at the vertices and midpoints and satisfying some additional $C^3$-conditions of type \eqref{eq:smoothnessconditions} along some of the interior edges. On a single triangle this space has dimension $42$, and the authors constructed a nodal macro-element for this space; see Figure \ref{fig:PS12-0}.

For a recent and similar construction see \cite{Davydov.Yeo13}. A family of smooth spline spaces on the 6-split and corresponding normalized bases were presented in \cite{Speleers13}. For other refinable $C^1$-quadratic elements on 6-splits see \cite{D.L.M.S:S00,Maes.Bultheel06} and \cite{Jia.Liu08}. In the latter a combination of 6- and 12-splits is used. For the FVS $C^1$-cubic quadrangular macro element see \cite{Davydov.Stevenson05,Hong.Schumaker04}, and for a survey of refinable multivariate spline functions see~\cite{Goodman.Hardin06}. 

The next section reviews some standard Bernstein-B\'ezier techniques. Section \ref{sec:macroelement} introduces a new macro-element space for $C^2$-quintics, with complete $C^3$-smoothness within each macrotriangle and dimension only $39$; see Figure~\ref{fig:PS12-1}. In the following two sections a Hermite subdivision scheme is derived, implemented, and tested in the computer algebra system \Sage. The final section concludes the paper.

\begin{figure}
\subfloat[]{\includegraphics[scale=0.64, bb = 30 0 197 200]{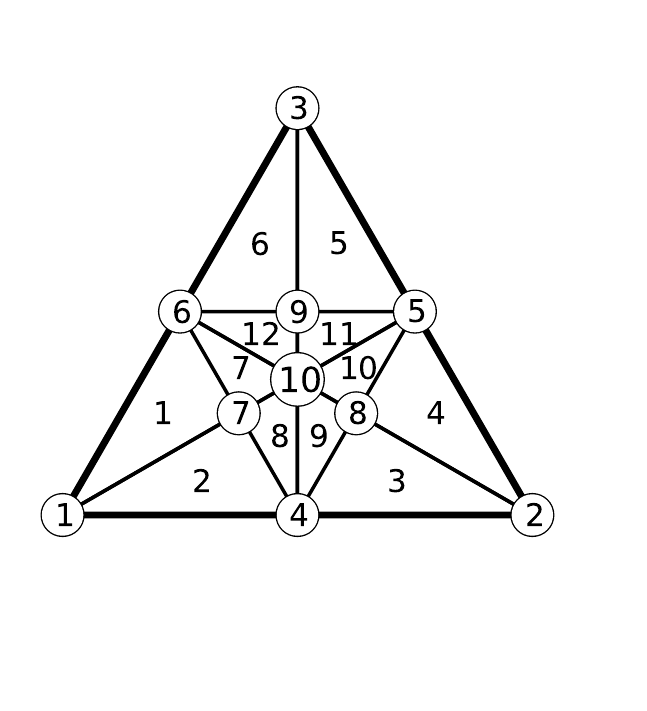}\label{fig:PS12-Labels12}}
\subfloat[]{\includegraphics[scale=0.64, bb = 10 0 197 200]{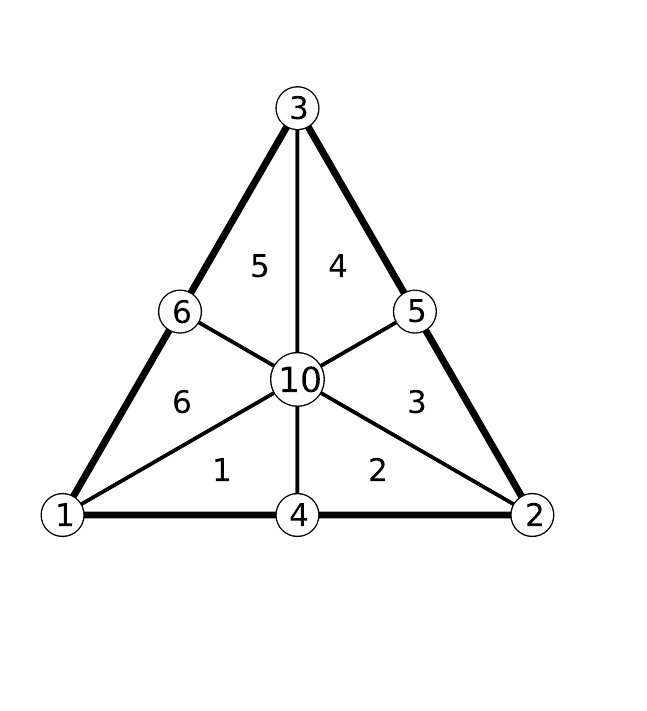}\label{fig:PS12-Labels6}}
\subfloat[]{\includegraphics[scale=0.64, bb = 20 0 180 200]{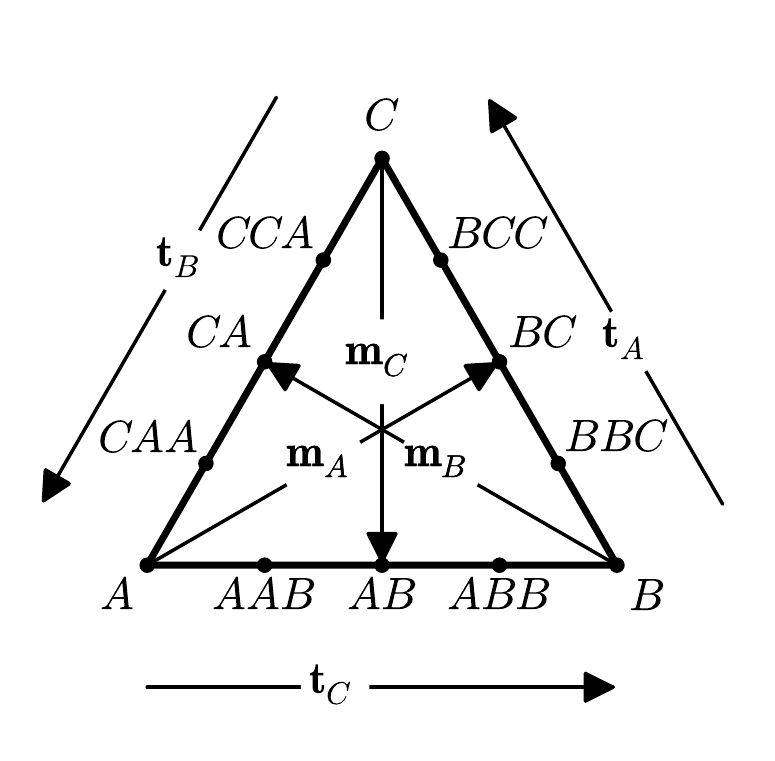}\label{fig:PS12-LabelsVectors}}
\caption{The Powell-Sabin 12-split (left) and 6-split (middle) with labeling of vertices and faces. Right: A triangle with corners $\vv_A, \vv_B, \vv_C$, midpoints $\vv_{AB}, \vv_{BC}, \vv_{CA}$, quarterpoints $\vv_{AAB}, \vv_{ABB}, \vv_{BBC}, \vv_{BCC}, \vv_{CCA}, \vv_{CAA}$, medial vectors $\mm_A, \mm_B, \mm_C$, and tangential vectors $\tt_A, \tt_B, \tt_C$.}\label{fig:SidesAndMedians}
\end{figure}

\begin{figure}
\begin{center}
\subfloat[]{\includegraphics[scale=0.69]{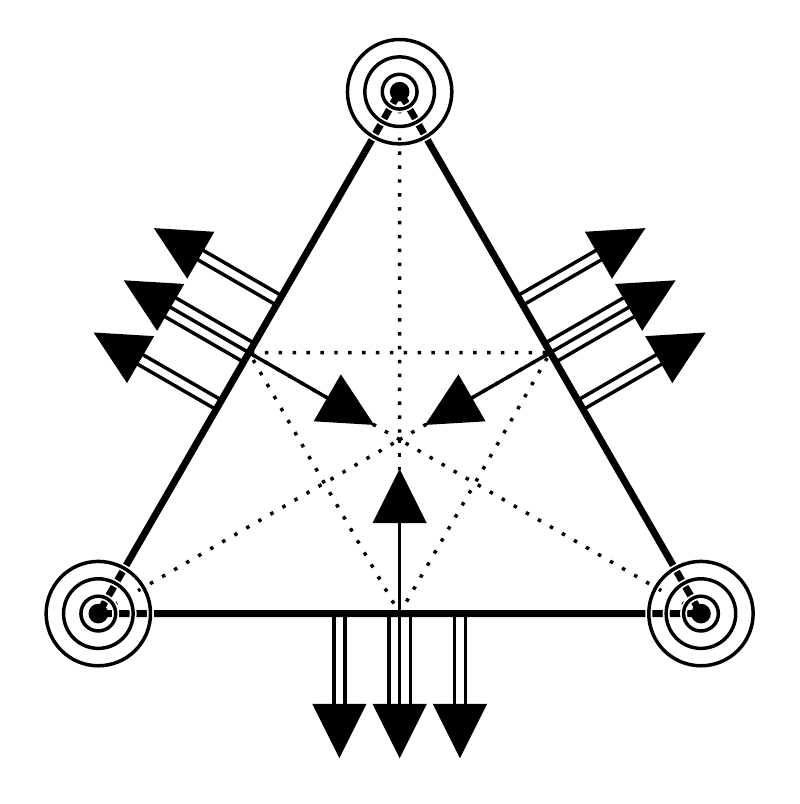}\label{fig:PS12-0}}\qquad 
\subfloat[]{\includegraphics[scale=0.69]{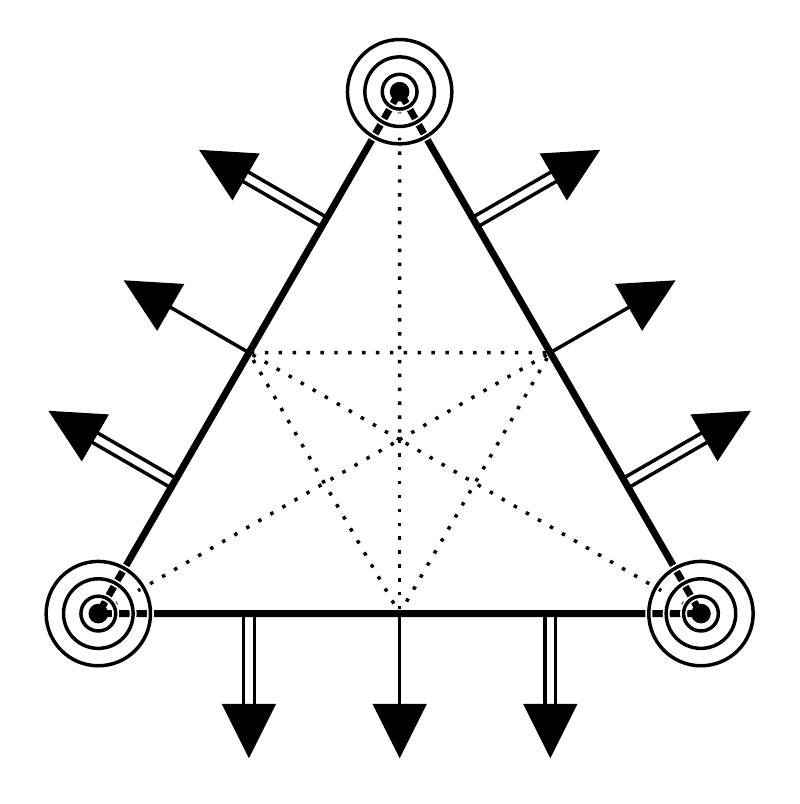}\label{fig:PS12-1}}
\end{center}
\caption{Schumaker and Sorokina's macro-element (left) and a new macro-element (right) on the 12-split. A bullet represents a point evaluation, three circles represent all derivatives up to order three, and a single, double, and triple arrow represent a first-, second-, and third-order directional derivative. These derivatives are evaluated at the rear end of the arrows, which are located at midpoints and adjacent domain points (left), and at the midpoints and quarterpoints (right).} \label{fig:PS12-MacroElements}
\end{figure}

\section{Bernstein-B\'ezier techniques}\label{sec:BernsteinBezier}
\noindent We follow the notation from \cite{Lai.Schumaker07}. Any point $\vv$ in a nondegenerate triangle $T = \langle \vv_1, \vv_2, \vv_3 \rangle$ can be represented by its \emph{barycentric coordinates} $(b_1,b_2,b_3)$, which are uniquely defined by $\vv = b_1 \vv_1 + b_2 \vv_2 + b_3 \vv_3$ and $b_1 + b_2 + b_3 = 1$.
Similarly, each vector $\uu$ is uniquely described by its \emph{directional coordinates}, i.e., the triple $(b_1 - b_1', b_2 - b_2', b_3 - b_3')$ with $(b_1, b_2, b_3)$ and $(b_1', b_2', b_3')$ the barycentric coordinates of two points $\vv$ and $\vv'$ such that $\uu = \vv - \vv'$.

A polynomial $p$ of degree $d$ defined on $T$ is conveniently represented by its \emph{B\'ezier form}
\[
p(\vv) = \sum_{i+j+k=d} c_{ijk} B_{ijk}^d(\vv), \qquad
B_{ijk}^d(\vv) := \frac{d!}{i!j!k!}b_1^i b_2^j b_3^k,
\]
where the $B_{ijk}^d$ are referred to as the \emph{Bernstein basis polynomials} of degree $d$ and the $c_{ijk}$ are called the \emph{B-coefficients} of $p$. We associate each B-coefficient $c_{ijk}$ to the \emph{domain point} $\xi_{ijk} := \frac{i}{d}\vv_1 + \frac{j}{d}\vv_2 + \frac{k}{d}\vv_3$. The \emph{disk of radius $m$ around $\vv_1$} is $D_m (\vv_1) := \{\xi_{ijk} : i \geq d - m \}$, and similarly for the other vertices.

For any differentiable function $f: \Omega \longrightarrow \RR$ and a vector $\uu\in \RR^2$ (not necessarily of unit length), we write
\[ f^\uu_\vv = \nabla_\uu f (\vv) := \left.\frac{\ud}{\ud t}f(\vv + t\uu)\right|_{t=0} \]
for the \emph{directional derivative} of $f$ along $\uu$ at $\vv$. If, more generally, $f$ is $k$-times differentiable for some integer $k\geq 1$, then, for any set of vectors $\{\uu_1, \uu_2, \ldots, \uu_k\}\subset \RR^2$, we write
\[ f^{\uu_1 \uu_2\cdots \uu_k} = \nabla_{\uu_1\uu_2\cdots\uu_k}f := \nabla_{\uu_1} \nabla_{\uu_2} \cdots \nabla_{\uu_k}f \]
for the \emph{$k$-th order directional derivative} in the directions $\uu_1, \uu_2,\ldots, \uu_k$ of $f$, and $f^{\uu_1 \uu_2\cdots \uu_k}_\vv$ for its point evaluation at $\vv$. We write
\[ \nabla^n_\uu f:=\nabla_{\uu_1\uu_2\cdots\uu_k}f,\]
if $ \uu=\uu_1=\cdots=\uu_k$.
We shall make use of the short-hand
\begin{equation}\label{eq:shorthand}
f^{\uu_1 \uu_2\cdots \uu_k}_{\vv \pm \ww} := f^{\uu_1 \uu_2\cdots \uu_k}_\vv \pm f^{\uu_1 \uu_2\cdots \uu_k}_\ww.
\end{equation}

For any triangle $\langle \vv_1, \vv_2, \vv_3\rangle$ there are some natural directions along which to consider directional derivatives. To any edge $\langle \vv_Y, \vv_Z \rangle$ opposing the vertex $\vv_X$, one associates the inward unit normal vector $\nn_X$, tangential vector $\tt_X := \vv_Z - \vv_Y$ and the medial vector $\mm_X := \frac12 (\vv_Y + \vv_Z) - \vv_X$; see Figure~\ref{fig:PS12-LabelsVectors}.
Since $\nn_X$ and $\tt_X$ are orthogonal,
\begin{equation}\label{eq:m-vs.-n}
\mm_X = \alpha_X\nn_X + \beta_X\tt_X,\qquad \alpha_X := \frac{\ip{\mm_X}{\nn_X}}{\ip{\nn_X}{\nn_X}},\qquad \beta_X := \frac{\ip{\mm_X}{\tt_X}}{\ip{\tt_X}{\tt_X}}.
\end{equation}
The corresponding directional derivatives obey similar relations.

A linear functional $\lambda : \SSS \longrightarrow \RR$ is called \emph{nodal} if it maps any spline to a linear combination of values and derivatives at a given point called the \emph{carrier} of $\lambda$. A set $\Lambda$ of nodal functionals is called a \emph{nodal determining set} for a spline space $\SSS$, if $\lambda s = 0$ for all $\lambda\in \Lambda$ implies $s\equiv 0$. If there is no smaller nodal determining set, $\Lambda$ is called \emph{minimal}. We call $\SSS$ a \emph{macro-element space} provided that there is a nodal minimal determining set $\Lambda$ for $\SSS$ such that for each triangle $T$ in the triangulation $\PS$, $s|_T$ is uniquely determined from the values $\{\lambda s \}_{\lambda \in \Lambda_T}$, with $\Lambda_T \subset \Lambda$ the functionals with carrier in $T$. In this case $\Lambda$ is a basis for the dual space of $\SSS$, and the dual basis to $\Lambda$ is called a \emph{nodal basis} of $\SSS$. The functionals in $\Lambda$ are called \emph{degrees of freedom}.

Suppose that $T := \langle \vv_1, \vv_2, \vv_3\rangle$  and $\tilde{T} := \langle \tilde{\vv}_1, \vv_2, \vv_3\rangle$ are triangles sharing the edge $e := \langle \vv_2, \vv_3\rangle$, and let
\[ p(\vv) := \sum_{i+j+k=d} c_{ijk} B_{ijk}^d (\vv), \qquad
\tilde{p}(\vv) := \sum_{i+j+k=d} \tilde{c}_{ijk} \tilde{B}_{ijk}^d (\vv)\]
be polynomials defined on these triangles, where $\{ B_{ijk} \}$ and $\{ \tilde{B}_{ijk} \}$ are the Bernstein basis polynomials associated with $T$ and $\tilde{T}$, respectively. Imposing a smooth join of $p$ and $\tilde{p}$ along $e$ translates into the linear relations among the B-coefficients presented in the following theorem.
\begin{theorem} 
Suppose $\uu$ is any direction not parallel to $e$. Then
\begin{equation}\label{eq:smoothnessconditions0}
\nabla^n_\uu\, p(\vv) = \nabla^n_\uu\, \tilde{p}(\vv)\qquad
\text{for all } \vv\in e\quad \text{ and }\quad n = 0,\ldots,r,
\end{equation} 
if and only if
\begin{equation}\label{eq:smoothnessconditions}
\tilde{c}_{njk} = \sum_{\nu + \mu + \kappa = n} c_{\nu,j+\mu,k+\kappa} 
\frac{n!}{\nu!\mu!\kappa!}b_1^\nu b_2^\mu b_3^\kappa,
\quad j+k=d-n,\quad n = 0,\ldots, r,
\end{equation}
where $b_1,b_2,b_3$ are the barycentric coordinates of $\tilde{\vv}_1$ with respect to $T$.
Moreover, if $\vv_1,\vv_3,\tilde{\vv}_1$ are collinear then \eqref{eq:smoothnessconditions} takes the form 
\begin{equation}\label{eq:smoothnessconditions1}
\tilde{c}_{njk} = \sum_{\nu = 0}^n c_{\nu,j,k+n-\nu} \binom{n}{\nu}b_1^ \nu(1-b_1)^{n-\nu},\qquad b_1=-\frac{\norm{\tilde{\vv}_1-\vv_3}}{\norm{\vv_1-\vv_3}},
\end{equation}
which can also be written
\begin{equation}\label{eq:smoothnessconditions2} 
\frac{\Delta^n e_0}{\norm{\vv_1-\vv_3}^n}=\frac{\Delta^n \tilde{e}_0}{\norm{\tilde{\vv}_1-\vv_3}^n},\end{equation}
where $\Delta ^n x_i=\sum_{j=0}^n(-1)^{n-j}\binom{n}{j}x_{i+j}$ is the usual $n$th order forward difference of $x_i,\ldots,x_{i+n}$, $ e_\nu=c_{\nu,j,k+n-\nu}$, and
$\tilde{e}_\nu=\tilde{c}_{n-\nu,j,k+\nu}$, $ \nu=0,1,\ldots,n$.
\end{theorem}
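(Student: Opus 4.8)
The plan is to establish the central equivalence \eqref{eq:smoothnessconditions0} $\Leftrightarrow$ \eqref{eq:smoothnessconditions} first, and then obtain \eqref{eq:smoothnessconditions1} and \eqref{eq:smoothnessconditions2} as successive specializations.

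For the main equivalence I would exploit the change-of-triangle formula for B\'ezier coefficients. Since $p$ is a polynomial it is defined on all of $\RR^2$, so it may be re-expanded in the Bernstein basis of $\tilde T = \langle \tilde{\vv}_1, \vv_2, \vv_3\rangle$, say $p = \sum_{i+j+k=d} c'_{ijk}\tilde{B}^d_{ijk}$. Writing $\tilde{\vv}_1 = b_1\vv_1 + b_2\vv_2 + b_3\vv_3$ and using the multiaffinity of the polar form of $p$ (equivalently, repeated subdivision replacing the vertex $\vv_1$ by $\tilde{\vv}_1$, with $\vv_2,\vv_3$ fixed), one finds
\[
c'_{njk} = \sum_{\nu+\mu+\kappa=n}\frac{n!}{\nu!\mu!\kappa!}b_1^\nu b_2^\mu b_3^\kappa\, c_{\nu,\,j+\mu,\,k+\kappa},\qquad j+k=d-n,
\]
which is precisely the right-hand side of \eqref{eq:smoothnessconditions} for every $n$. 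Thus \eqref{eq:smoothnessconditions} asserts exactly that $p$ and $\tilde{p}$ have the same B\'ezier coefficients on $\tilde T$ in the first $r+1$ layers $\{i \le r\}$. It then remains to observe that the directional-derivative formula for a B\'ezier polynomial expresses $\nabla^n_\uu p$ restricted to $e$, for any $\uu$ transversal to $e$, through the coefficients in layers $0,\ldots,n$ only, triangularly and with nonvanishing diagonal (the coefficient of the top layer being a power of the nonzero transversal component of $\uu$); hence agreement of the first $r+1$ layers on $\tilde T$ is equivalent to \eqref{eq:smoothnessconditions0}. Alternatively one may simply invoke the standard $C^r$ smoothness conditions from \cite{Lai.Schumaker07}.

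For \eqref{eq:smoothnessconditions1} I would specialize to the collinear case. If $\vv_1,\vv_3,\tilde{\vv}_1$ are collinear then $\tilde{\vv}_1$ lies on the line through $\vv_1$ and $\vv_3$, so its second barycentric coordinate vanishes, $b_2=0$. Setting $b_2 = 0$ in \eqref{eq:smoothnessconditions} annihilates every term with $\mu>0$, the trinomial coefficient collapses to $\binom{n}{\nu}$, and with $b_3 = 1-b_1$ this is exactly \eqref{eq:smoothnessconditions1}. The stated value $b_1 = -\norm{\tilde{\vv}_1-\vv_3}/\norm{\vv_1-\vv_3}$ follows from computing the first barycentric coordinate of $\tilde{\vv}_1$ directly: on the line through $\vv_1$ and $\vv_3$ the coordinate $b_1$ is the signed ratio in which $\tilde{\vv}_1$ divides the segment from $\vv_3$ to $\vv_1$, and since $\tilde T$ lies on the opposite side of $e$ from $\vv_1$ the point $\tilde{\vv}_1$ sits beyond $\vv_3$, making $b_1$ negative with magnitude $\norm{\tilde{\vv}_1-\vv_3}/\norm{\vv_1-\vv_3}$. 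Finally, \eqref{eq:smoothnessconditions2} is the univariate reading of \eqref{eq:smoothnessconditions1}. Restricting to the common line $L$ through $\vv_1,\vv_3,\tilde{\vv}_1$, the traces $p|_L$ and $\tilde{p}|_L$ are univariate degree-$d$ polynomials in Bernstein form on $[\vv_3,\vv_1]$ and $[\vv_3,\tilde{\vv}_1]$, whose relevant endpoint coefficients are the $e_\nu$ and $\tilde{e}_\nu$; the reversed indexing $\tilde{e}_\nu = \tilde{c}_{n-\nu,j,k+\nu}$ records that $\vv_1$ and $\tilde{\vv}_1$ lie on opposite sides of the shared endpoint $\vv_3$. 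Using the univariate fact that the $n$-th derivative of a degree-$d$ Bernstein polynomial at an endpoint equals $\tfrac{d!}{(d-n)!}$ times the scaled forward difference of the endpoint coefficients divided by the $n$-th power of the interval length, the matching of $n$-th derivatives at $\vv_3$ becomes \eqref{eq:smoothnessconditions2}; equivalently, one verifies the identity $\Delta^n\tilde{e}_0 = (-b_1)^n\Delta^n e_0$ directly from \eqref{eq:smoothnessconditions1} and substitutes the value of $b_1$.

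I expect the bookkeeping in the collinear case to be the main obstacle: getting the sign of $b_1$ right, matching it against the reversed index convention for $\tilde{e}_\nu$, and checking that the two effects combine into the clean factor $\bigl(\norm{\tilde{\vv}_1-\vv_3}/\norm{\vv_1-\vv_3}\bigr)^n$ with the correct sign, so that \eqref{eq:smoothnessconditions2} comes out symmetric in the two sides. The main equivalence itself is essentially routine once the change-of-frame formula is in hand.
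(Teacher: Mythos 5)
Your proposal is correct and follows essentially the same route as the paper: the paper simply cites \cite[Thm.~2.28]{Lai.Schumaker07} (after reordering $\vv_2,\vv_3$) for the equivalence of \eqref{eq:smoothnessconditions0} and \eqref{eq:smoothnessconditions}, sets $b_2=0$ for the collinear case, and obtains \eqref{eq:smoothnessconditions2} by induction or univariate Bernstein--B\'ezier theory, all of which you reproduce (with the cited theorem unpacked via the polar-form change-of-triangle formula and the triangular-system argument, and with the correct identity $\Delta^n\tilde{e}_0=(-b_1)^n\Delta^n e_0$).
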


\begin{proof}
The equivalence of \eqref{eq:smoothnessconditions0} and \eqref{eq:smoothnessconditions} follows from \cite[Thm. 2.28]{Lai.Schumaker07} by switching the order of the vertices $\vv_2$ and $\vv_3$ in $\tilde T$. If $\vv_1,\vv_3,\tilde{\vv}_1$ are collinear, then $b_2=0$ and all terms with $\mu>0$ in \eqref{eq:smoothnessconditions} are zero. We obtain \eqref{eq:smoothnessconditions1} with $b_3=1-b_1$. It can be shown by induction on $n$ that \eqref{eq:smoothnessconditions2}  follows  from \eqref{eq:smoothnessconditions1}. Alternatively it follows from univariate Bernstein-B\'ezier theory.
\end{proof}

Let us illustrate this theorem with several examples that will be used in the proof of Theorem~\ref{thm:Macro-Element12}.

\begin{example}\label{ex:bivariateC1C2C3}
Let $T = \langle \vv_7, \vv_{10}, \vv_6 \rangle$ and $\tilde{T} = \langle \vv_9, \vv_6, \vv_{10} \rangle$ as in Figure \ref{fig:PS12-Labels12}. Then $\vv_9$ has barycentric coordinates $(-1, \frac32, \frac12)$ with respect to $T$, and the coefficients in \eqref{eq:smoothnessconditions} are, for $n = 1, 2, 3$ and $j = 0$, shown in Figure~\ref{fig:bivariateC1}--\ref{fig:bivariateC3}.
\end{example}

\begin{example}
See Figure~\ref{fig:univariate1}, \ref{fig:univariate2} for the coefficients in univariate $C^1$-, $C^2$-, and $C^3$-conditions across the edges $\langle\vv_1,\vv_7\rangle$, $\langle\vv_6,\vv_7\rangle$, and see Figure~\ref{fig:univariate3} for the coefficients of a univariate $C^3$-condition along the line segment $\langle\vv_4,\vv_9\rangle$. We can use either \eqref{eq:smoothnessconditions1} or \eqref{eq:smoothnessconditions2} to show this. We use \eqref{eq:smoothnessconditions2} with $\norm{\tilde{\vv}_1-\vv_3}=\norm{\vv_1-\vv_3}$ for (d) and $\norm{\tilde{\vv}_1-\vv_3}=\norm{\vv_1-\vv_3}/3$ for (f). For (e) we use \eqref{eq:smoothnessconditions1} with $b_1=-1/3$. 
\end{example}

Imposed values of the degrees of freedom translate into the linear relations among the B-coefficients presented in the following theorem; see e.g. \cite[Thm. 2.15]{Lai.Schumaker07}.
\begin{theorem}\label{thm:dirder}
Let $1\leq m\leq d$, and suppose we are given, for $i = 1,\ldots,m$, a vector $\uu_i$ with directional coordinates $a^{(i)} := \big(a^{(i)}_1 , a^{(i)}_2 , a^{(i)}_3\big)$. Then
\[ \nabla_{\uu_m} \cdots \nabla_{\uu_1} p (\vv) =
\frac{d!}{(d-m)!} \sum_{i+j+k=d-m} c^{(m)}_{ijk} B_{ijk}^{d-m}(\vv),
\]
where $c^{(0)}_{ijk} = c_{ijk}$ and
\[
c^{(m)}_{ijk} =
a^{(m)}_1 c^{(m-1)}_{i+1,j,k} + a^{(m)}_2 c^{(m-1)}_{i,j+1,k} + a^{(m)}_3 c^{(m-1)}_{i,j,k+1},
\qquad m = 1,\ldots, d.
\]
\end{theorem}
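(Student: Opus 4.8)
The plan is to prove the formula for the $m$-th order directional derivative of a B\'ezier polynomial by induction on $m$, treating each application of a single directional derivative $\nabla_{\uu_i}$ as the inductive step. The base case $m=0$ is trivial, since $c^{(0)}_{ijk}=c_{ijk}$ recovers the original B\'ezier form $p(\vv)=\sum_{i+j+k=d}c_{ijk}B^d_{ijk}(\vv)$. The heart of the matter is therefore the single-derivative step: I must show that if $q(\vv)=\tfrac{d!}{(d-m+1)!}\sum_{i+j+k=d-m+1}c^{(m-1)}_{ijk}B^{d-m+1}_{ijk}(\vv)$, then applying $\nabla_{\uu_m}$ yields the claimed expression with the updated coefficients $c^{(m)}_{ijk}$ and degree lowered by one.

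First I would record the key fact that drives the computation: the directional derivative of a single Bernstein polynomial lowers its degree and redistributes it onto the three neighboring lower-degree basis functions. Concretely, writing $\uu_m$ in directional coordinates $(a_1,a_2,a_3)$ and using that $\nabla_{\uu_m}$ acts on the barycentric monomial $b_1^ib_2^jb_3^k$ via the chain rule together with $\nabla_{\uu_m}b_\ell=a_\ell$ (the directional derivative of the $\ell$-th barycentric coordinate is exactly its $\ell$-th directional coordinate, since barycentric coordinates are affine), one obtains an identity of the shape
\[
\nabla_{\uu_m}B^{e}_{ijk}
= e\Bigl(a_1 B^{e-1}_{i-1,j,k}+a_2 B^{e-1}_{i,j-1,k}+a_3 B^{e-1}_{i,j,k-1}\Bigr),
\]
with $e=d-m+1$, where terms having a negative index are understood to vanish. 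This is the single computation I would carry out carefully, since it is the only genuine piece of analysis; everything else is bookkeeping.

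Next I would substitute this identity into $\nabla_{\uu_m}q$ and reorganize the triple sum by collecting, for each target multi-index $(i,j,k)$ with $i+j+k=d-m$, the three contributions that land on $B^{d-m}_{ijk}$. Re-indexing shows that the coefficient $c^{(m-1)}_{i+1,j,k}$ contributes with weight $a_1$, the coefficient $c^{(m-1)}_{i,j+1,k}$ with weight $a_2$, and $c^{(m-1)}_{i,j,k+1}$ with weight $a_3$, which is precisely the stated recursion $c^{(m)}_{ijk}=a^{(m)}_1 c^{(m-1)}_{i+1,j,k}+a^{(m)}_2 c^{(m-1)}_{i,j+1,k}+a^{(m)}_3 c^{(m-1)}_{i,j,k+1}$. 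The prefactor transforms as $e\cdot\tfrac{d!}{(d-m+1)!}=\tfrac{d!}{(d-m)!}$, matching the claimed normalization. Since the operators $\nabla_{\uu_1},\ldots,\nabla_{\uu_m}$ commute (mixed partials of a polynomial agree), the order in which the $\uu_i$ are peeled off is immaterial, which justifies applying the step to $\uu_m$ last.

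The main obstacle, such as it is, lies entirely in the careful index accounting during the re-indexing of the sum: one must verify that the boundary terms generated by the vanishing negative-index Bernstein polynomials do not produce spurious contributions, and that the shift of summation variables correctly aligns the three incoming terms onto a common index $(i,j,k)$. This is routine but error-prone. Alternatively, since the result is exactly the blossoming/de~Casteljau directional-derivative formula, I would simply cite \cite[Thm.~2.15]{Lai.Schumaker07} as the reference already flagged in the text, and present the inductive argument above only as a self-contained confirmation.
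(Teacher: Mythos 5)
Your argument is correct and complete: the identity $\nabla_{\uu}B^{e}_{ijk}=e\bigl(a_1B^{e-1}_{i-1,j,k}+a_2B^{e-1}_{i,j-1,k}+a_3B^{e-1}_{i,j,k-1}\bigr)$ follows exactly as you say from $\nabla_{\uu}b_\ell=a_\ell$ (barycentric coordinates being affine), and the re-indexing then yields both the recursion for $c^{(m)}_{ijk}$ and the factor $\tfrac{d!}{(d-m)!}$. The paper itself supplies no proof of this theorem --- it is stated as a known result with the pointer to \cite[Thm.~2.15]{Lai.Schumaker07} --- so your inductive derivation is simply the standard proof of that cited fact, written out; your closing alternative (citing the reference) is precisely what the paper does. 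One small remark: the appeal to commutativity of the $\nabla_{\uu_i}$ is unnecessary, since in $\nabla_{\uu_m}\cdots\nabla_{\uu_1}p$ the operator $\nabla_{\uu_m}$ is already the outermost one, so the inductive step applies it directly to the degree-$(d-m+1)$ form given by the induction hypothesis.
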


As a consequence, for any $r\leq d$, specifying all derivatives up to order $r$ at some vertex $\vv$ is equivalent to specifying the B-coefficients in the disk $D_r(\vv)$ \cite[Thm. 2.18]{Lai.Schumaker07}.

\begin{figure}
\subfloat[]{\includegraphics[scale=0.7]{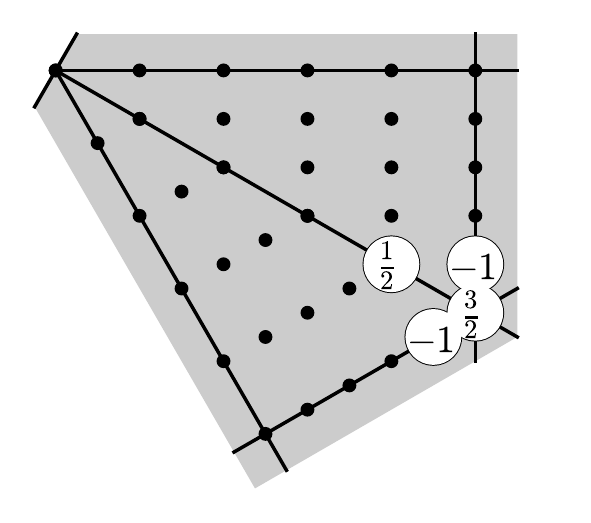}\label{fig:bivariateC1}}
\subfloat[]{\includegraphics[scale=0.7]{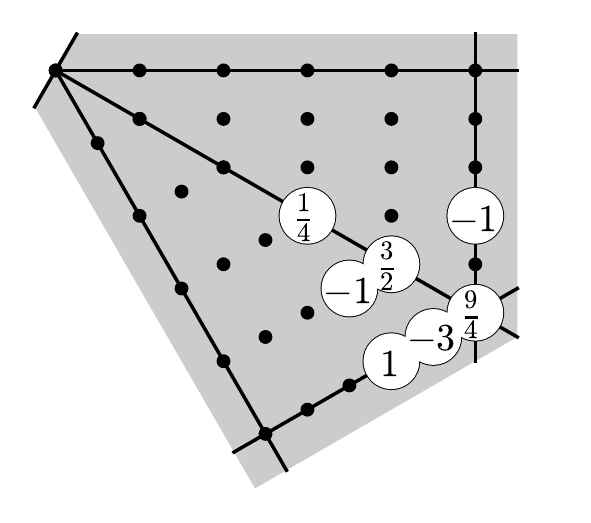}\label{fig:bivariateC2}}
\subfloat[]{\includegraphics[scale=0.7]{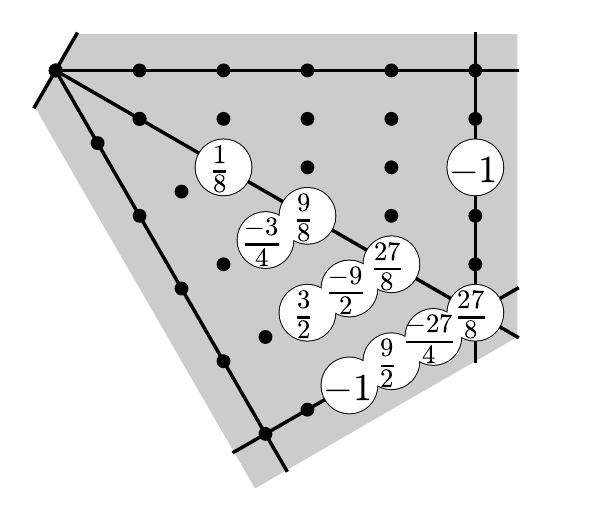}\label{fig:bivariateC3}}\\
\subfloat[]{\raisebox{-0.55em}{\includegraphics[scale=0.7]{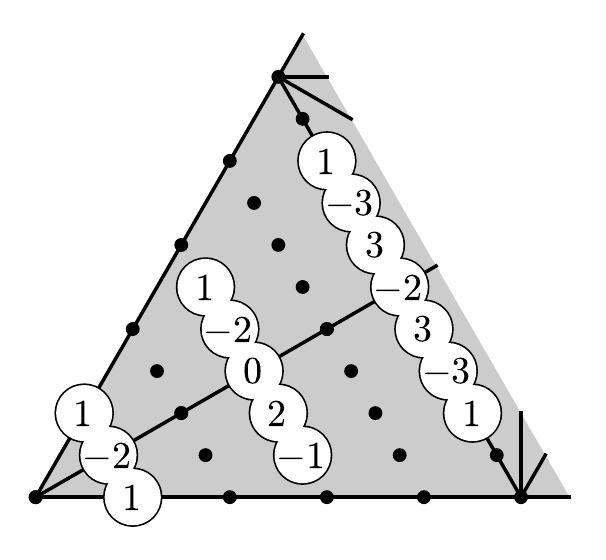}}\label{fig:univariate1}}
\subfloat[]{\includegraphics[scale=0.7]{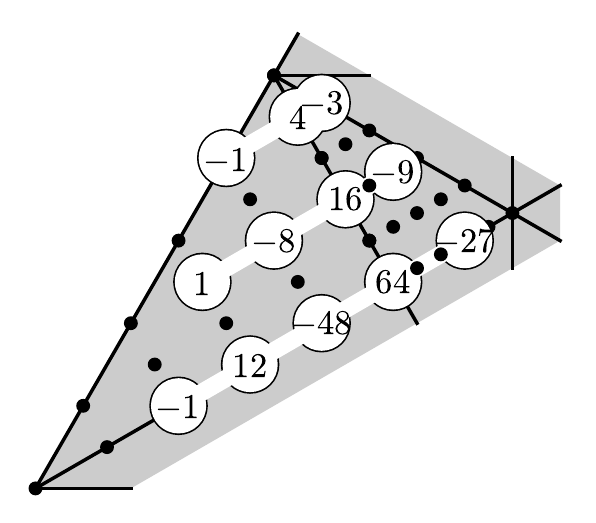}\label{fig:univariate2}}
\subfloat[]{\includegraphics[scale=0.7]{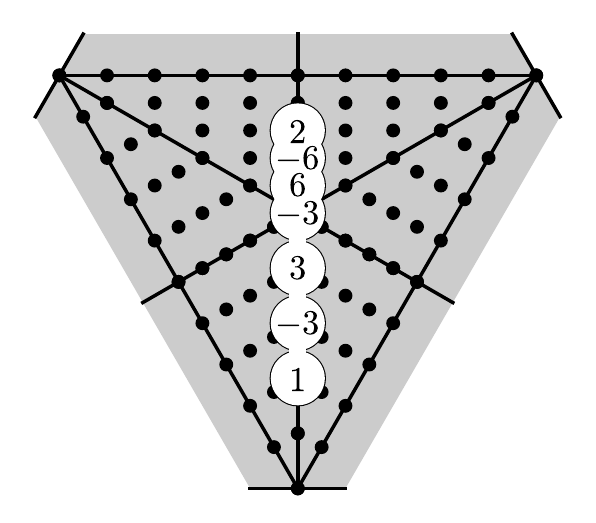}\label{fig:univariate3}}

\caption{Setting to zero linear combinations of the B-coefficients with weights in the figures gives various smoothness conditions \eqref{eq:smoothnessconditions} across the edges $\langle \vv_6, \vv_{10}\rangle$ (a, b, c), $\langle\vv_1,\vv_7 \rangle$ (d), $\langle\vv_6, \vv_7 \rangle$ (e), and along the line segment $\langle \vv_9, \vv_4 \rangle$ (f) in Figure~\ref{fig:PS12-Labels12}. The stencils in (a, b, c, d, e) can be moved along the shared edge.
}\label{fig:bivariateC1C2C3}
\end{figure}

\section{Macro-elements on Powell-Sabin splits}\label{sec:macroelement}
\noindent For the purpose of constructing approximations by piecewise quadratics, Powell and Sabin introduced two methods to split each triangle in a triangulation $\PS$ \cite{Powell.Sabin77}. For the first of these, one considers a new vertex $\vv_T$ in the interior of each triangle $T=\langle \vv_1, \vv_2, \vv_3 \rangle$ in such a way that, for any triangles $T, \tilde{T}$ sharing an edge $e$, the line segment $\langle \vv_T, \vv_{\tilde{T}} \rangle$ intersects $e$ in a new point $\ww_e$. For any edge $e$ on the boundary, one writes $\ww_e$ for the midpoint of $e$. Inserting, for any edge $e$ and vertex $\vv_i$ of a triangle $T$, the new vertices $\vv_T$, $\ww_e$ and edges $\langle \vv_T, \ww_e \rangle, \langle \vv_T, \vv_i \rangle$ into $\Delta$, one arrives at a refined triangulation $\PSA$ known as the \emph{Powell-Sabin 6-split} of $T$; see Figure \ref{fig:PS12-Labels6}.

How should one choose the points $\vv_T$ so that each $\ww_e$ is well defined? If all triangles are acute one can choose $\vv_T$ to be the circumcenter of $T$, and in general one can choose $\vv_T$ to be the incenter of $T$ \cite{Lai.Schumaker03}. Alternatively, one can take the barycenter $\vv_T = (\vv_1 + \vv_2 + \vv_3)/3$ of each triangle $T$, but this is only possible if the line-segment joining the barycenters of two adjacent triangles intersect the common edge. Alfeld and Schumaker \cite{Alfeld.Schumaker02} avoid this geometric constraint by, for any edge $e$ shared by two adjacent triangles $T, \tilde{T}$, choosing $\ww_e$ freely, no longer requiring $\vv_T, \ww_e, \vv_{\tilde{T}}$ to be collinear. This has the additional advantage that the choice of $\vv_T$ no longer affects the geometry of the split in any neighbouring macrotriangle.

The latter advantage is shared by the second split introduced by Powell and Sabin, which is constructed as follows. Given a triangle $T = \langle \vv_1, \vv_2, \vv_3\rangle$, write $e_1 := \langle \vv_2, \vv_3\rangle$, $e_2 := \langle \vv_3, \vv_1\rangle$, and $e_3 := \langle \vv_1, \vv_2\rangle$ for its edges. The \emph{midpoint} of any edge $e = \langle \vv, \vv'\rangle$ is the average $\ww_e := (\vv + \vv')/2$ of its endpoints, and its \emph{quarterpoints} are the averages $(3\vv + \vv')/4$ and $(\vv + 3\vv')/4$ of the midpoint with the endpoints. We construct a 6-split by inserting a vertex $\vv_T$ at the barycenter, which we connect to the old vertices and the midpoints. Connecting the midpoints we arrive at the \emph{Powell-Sabin 12-split} of $T$; see Figure \ref{fig:PS12-Labels12}.

Let $\PS$ be a triangulation of some domain $\Omega\subset \RR^2$. Replacing every triangle in $\PS$ by a 6-split (resp. 12-split) results in a finer triangulation $\PSA$ (resp. $\PSB$). We write $\SSS_6$ (resp. $\SSS_{12}$) for the space of piecewise quintic polynomials with global $C^2$-smoothness on $\PSA$ (resp. $\PSB$) and $C^3$-smoothness on each macrotriangle in $\PS$. We consider a set $\Lambda_6$ of functionals on $\SSS_6$, comprising point evaluations and partial derivatives up to order three at any vertex $v$ of $\PS$. We also consider the superset $\Lambda_{12}\supset \Lambda_6$ of functionals on $\SSS_{12}\supset \SSS_6$, which in addition contains, for each edge $e$ in $\PS$, a first-order cross-boundary derivative at the midpoint and a second-order cross-boundary derivative at each quarter point; see Figure \ref{fig:PS12-1}.

On a single triangle, one has $\dim(\SSS_{12}) = 39$ and $\dim(\SSS_6) = 30$, matching the cardinalities of $\Lambda_{12}$ and $\Lambda_6$. This can be computed numerically using a JAVA applet written by Peter Alfeld described in \cite{Alfeld00} and available at \verb§http://www.math.utah.edu/~pa/MDS/§. The dimension of the 6-split follows immediately from an ``interior cell formula'' \cite[Thm. 9.3]{Lai.Schumaker07}, while the dimension of the 12-split follows from a formula for the dimension for general degree $d$ and smoothness $r$, which we plan on reporting elsewhere.

\begin{remark}\label{rem:C3}
In Section \ref{sec:subdivision} we derive a Hermite subdivision scheme for computing any spline $s\in \SSS_{12}$ from the initial data
\begin{equation}\label{eq:InitialData12}
\lambda(s) = s_\lambda,\qquad \lambda\in \Lambda_{12}.
\end{equation}
Along any edge $e$ in $\Delta$, a third-order cross-boundary derivative at the midpoint $\ww_e$ cannot be determined from just the functionals along $e$, as this would imply global $C^3$-smoothness of $s$. Since any spline $s\in \SSS_{12}$ has global $C^2$-smoothness and $C^3$-smoothness within each macrotriangle, the subdivision rules for the other derivatives at $\ww_e$ can only involve initial data along~$e$.
\end{remark}

The following theorem shows that any spline $s\in \SSS_{12}$ is uniquely determined from the initial data \eqref{eq:InitialData12}.

\begin{theorem}\label{thm:Macro-Element12}
The set $\Lambda_{12}$ forms a nodal minimal determining set for $\SSS_{12}$.
\end{theorem}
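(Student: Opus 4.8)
\medskip

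The plan is to reduce the theorem to a single macrotriangle and then argue with B-coefficients. Every functional in $\Lambda_{12}$ is carried by a vertex, edge midpoint, or quarterpoint shared by the macrotriangles meeting there, so it suffices to prove the local assertion that for each macrotriangle $T=\langle\vv_1,\vv_2,\vv_3\rangle$ the restriction $s|_T$ is determined by the subset $\Lambda_T\subset\Lambda_{12}$ of functionals carried by $T$. Because $|\Lambda_T|=39=\dim\SSS_{12}|_T$, this amounts to the single implication that $\lambda s=0$ for all $\lambda\in\Lambda_T$ forces $s|_T\equiv 0$; the local evaluation map $s|_T\mapsto(\lambda s)_{\lambda\in\Lambda_T}$ is then a bijection. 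The interpolants built on two macrotriangles sharing an edge $e$ agree along $e$, since the functionals carried by $e$ lie in both subsets, so these local maps assemble into a global bijection $s\mapsto(\lambda s)_{\lambda\in\Lambda_{12}}$. Hence $\Lambda_{12}$ is determining, $\dim\SSS_{12}=|\Lambda_{12}|$, and no determining set can be smaller, which gives minimality. The whole proof thus comes down to showing that vanishing degrees of freedom force all B-coefficients of the twelve quintic pieces of $s|_T$ to vanish.

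First I would clear the corners. At each vertex $\vv_i$ the ten functionals of $\Lambda_T$ record the value and all derivatives up to order three; by the consequence of Theorem~\ref{thm:dirder} noted above, annihilating them is equivalent to annihilating the ten B-coefficients in the disk $D_3(\vv_i)$. The $C^3$-smoothness of $\SSS_{12}$ inside $T$ makes these derivative functionals single-valued at $\vv_i$ and forces the two quintic pieces meeting there to share the cleared disk, so after this step every B-coefficient within distance three of a corner is zero.

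Next I would propagate the zeros along the boundary and then inward using the smoothness stencils prepared in the examples. Along each macro-edge the trace of $s$ is a pair of quintics joined with $C^3$-smoothness at the midpoint; because the relevant vertices are collinear, this join is governed by the univariate difference relations \eqref{eq:smoothnessconditions2} (Figures~\ref{fig:univariate1}--\ref{fig:univariate2}), and combining these with the already-cleared corner disks forces the remaining edge coefficients to vanish one at a time. I would then move into the interior: the first- and second-order cross-boundary derivatives at the midpoints and quarterpoints, converted into B-coefficient relations by Theorem~\ref{thm:dirder}, clear the layers adjacent to each edge, while the bivariate $C^1$-, $C^2$-, $C^3$-stencils across the midpoint-to-centroid edges (Example~\ref{ex:bivariateC1C2C3} and Figures~\ref{fig:bivariateC1}--\ref{fig:bivariateC3}) together with the univariate $C^3$-relation along the medians (Figure~\ref{fig:univariate3}) express each surviving coefficient through coefficients already shown to be zero, finishing with those surrounding the centroid.

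The main obstacle is the interior bookkeeping. I must order the sweep so that at each interior edge the applicable smoothness condition introduces exactly one new unknown, or at worst a square invertible subsystem, rather than coupling several undetermined coefficients simultaneously; this is where the explicit stencils of the two examples, and the $2{:}1$ ratios recorded there, are essential. The delicate point is the closure at the centroid, which is tied to Remark~\ref{rem:C3}: a third-order cross-boundary derivative at a midpoint is deliberately \emph{not} among the degrees of freedom, since prescribing it would impose global $C^3$-smoothness, so the elimination must terminate at the centroid with neither a residual free coefficient nor a contradictory relation. Verifying that the $39$ conditions pin down all interior coefficients with neither deficiency nor surplus is the crux of the argument, and once it is done the implication $s|_T\equiv 0$, and with it the theorem, follows.
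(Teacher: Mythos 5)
Your framework---localize to one macrotriangle, note $|\Lambda_T|=39=\dim\SSS_{12}|_T$, clear the disks $D_3(\vv_i)$ from the corner data, determine the boundary trace and the first two cross-boundary derivatives along each edge from univariate spline dimension counts, then eliminate the interior B-coefficients via the smoothness stencils---is exactly the paper's strategy, including the final gluing observation that $s|_e$, $\nabla_{\mm} s|_e$, $\nabla^2_{\mm} s|_e$ depend only on functionals carried by $e$, so adjacent interpolants join with $C^2$-smoothness. But you stop precisely at what you yourself call the crux: you assert that the interior sweep can be ordered so that each smoothness condition introduces exactly one new unknown (or at worst an invertible block), and you neither exhibit such an ordering nor verify any invertibility. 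That is a genuine gap, and the paper's resolution shows that the one-unknown-at-a-time sweep you envision does not exist in that form.

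What actually happens is this: after the boundary layers are cleared, the $64$ remaining coefficients cannot be picked off sequentially, because three of them---$c_1,c_2,c_3$, equivalent by Theorem~\ref{thm:dirder} to the third-order cross-boundary derivatives at the three midpoints---are, by Remark~\ref{rem:C3}, not determined by any data along their respective edges. The paper therefore carries all three as free parameters, expresses every other unknown coefficient as a linear combination of $c_1,c_2,c_3$ modulo known quantities (using the univariate $C^3$-stencils across $\langle\vv_6,\vv_7\rangle$ and its analogues, and the bivariate $C^1$-stencil across $\langle\vv_6,\vv_{10}\rangle$), and only then closes the system: the univariate $C^3$-conditions along the medians $\langle\vv_4,\vv_9\rangle$, $\langle\vv_5,\vv_7\rangle$, $\langle\vv_6,\vv_8\rangle$ force $c_1\equiv c_2\equiv c_3$, and substituting this into the bivariate $C^3$-condition of Figure~\ref{fig:bivariateC3} yields $0\equiv\frac{18}{16}\,c_1$ modulo known coefficients, which kills the last degree of freedom. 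This closure is where the $C^3$-supersmoothness inside the macrotriangle is actually used, and the nonzero pivot $\frac{18}{16}$ is a computation that must be checked, not a structural triviality. Without it the implication $\lambda s=0$ for all $\lambda\in\Lambda_T$ $\Rightarrow$ $s|_T\equiv 0$, and hence the theorem, is not established.
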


\begin{figure}
\begin{center}
\noindent\includegraphics[scale=0.71]{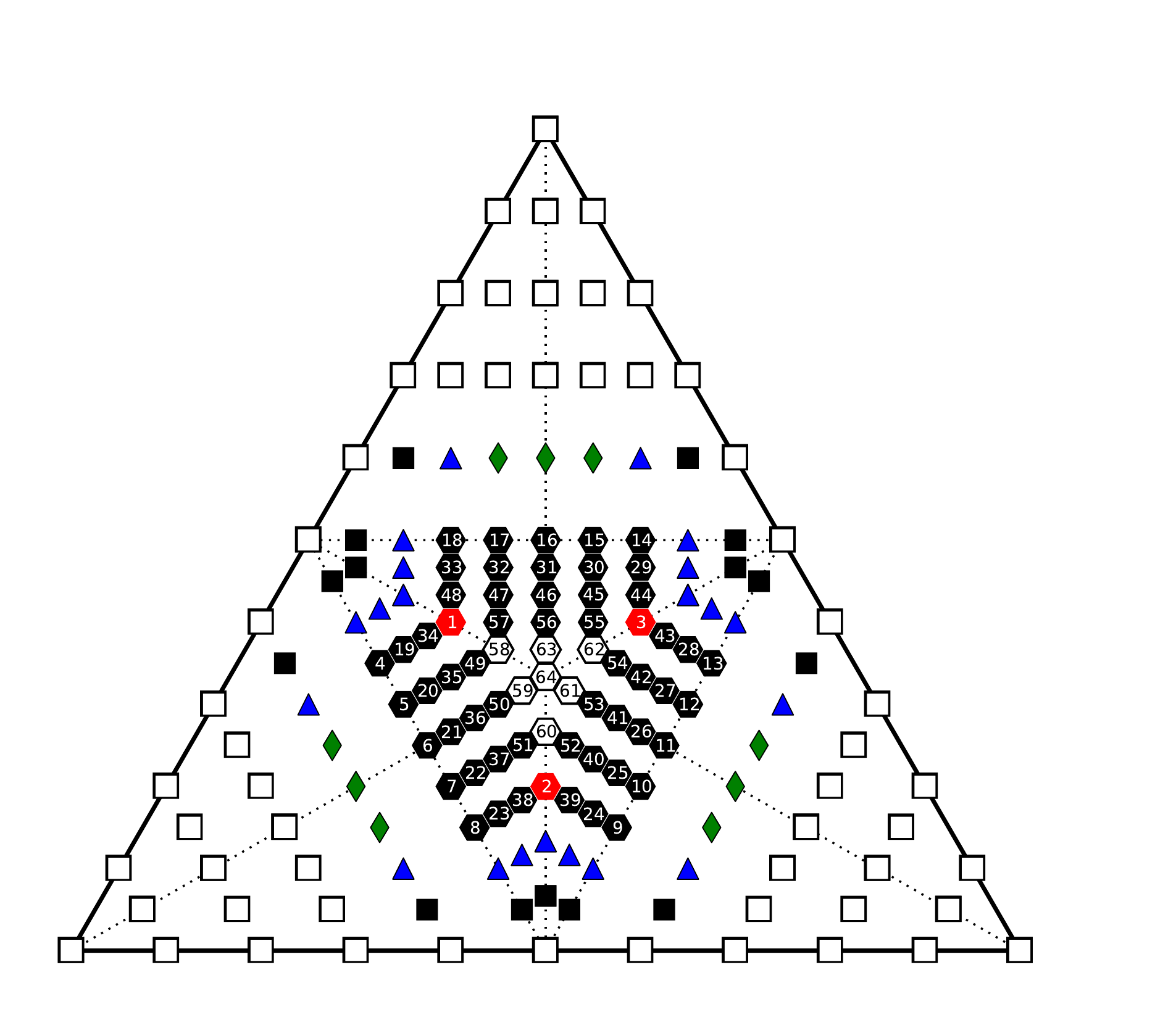}
\end{center}
\caption{A schematic depiction of successively determined B-coefficients of type $\square, \blacksquare,{\color{blue}\blacktriangle}, {\color{Green}\blacklozenge}, \hexagon$ in the proof of Theorem~\ref{thm:Macro-Element12}. }\label{fig:Proof-MDS12}
\end{figure}

\begin{proof}
For any vertex $\vv$ of $\PS$, the derivatives up to order three at $\vv$ determine the disk $D_3(\vv)$. Along each edge $e = \langle \vv_1, \vv_2 \rangle$ of $\PS$, we therefore know the initial and final four B-coefficients. Therefore, since the space of $C^3$-smooth quintic univariate splines along $e$ has dimension 8, the restriction of $s$ to $e$ is determined. Thus the nodal data at the corners determine the B-coefficients marked $\square$ in Figure~\ref{fig:Proof-MDS12}.

Let $\mm$ and $\tt$ be the medial and tangential directions associated to $e$. Since the space of $C^2$-smooth quartic univariate splines along $e$ has dimension 7, the restriction of the cross-boundary derivative $\nabla_{\mm} s$ to $e$ is determined by its value at the mid-point $\ww_e$ and the 3 degrees of freedom $\nabla_{\mm} s, \nabla_{\tt\mm} s, \nabla_{\tt\tt\mm} s$ at the corners $\vv_1,\vv_2,\vv_3$. Specifying, in addition, a first-order cross-boundary derivative at each midpoint therefore determines the B-coefficients marked $\blacksquare$ in Figure \ref{fig:Proof-MDS12}.

Similarly, since the space of $C^1$-smooth cubic univariate splines along $e$ has dimension 6, the restriction of $\nabla^2_{\mm} s$ to $e$ is determined by its value at the 2 quarterpoints of $e$ and the 2 degrees of freedom $\nabla^2_{\mm} s, \nabla^2_{\mm} \nabla_{\tt} s$ at the corners $\vv_1,\vv_2,\vv_3$. Specifying second-order cross-boundary derivatives at the quarterpoints therefore determines the B-coefficients marked ${\color{blue}\blacktriangle}$ in Figure~\ref{fig:Proof-MDS12}.

In order to show that these 39 conditions are independent, we need to determine the remaining B-coefficients from the smoothness conditions. For instance, using the smoothness conditions with coefficients shown in Figure~\ref{fig:univariate1}, one determines the B-coefficients marked ${\color{Green}\blacklozenge}$ in Figure \ref{fig:Proof-MDS12}.

Let us refer to the coefficients thus far as \emph{known}. The remaining \emph{unknown} coefficients are numbered $c_1,\ldots,c_{64}$ as in Figure~\ref{fig:Proof-MDS12}. By Theorem~\ref{thm:dirder}, determining $c_1$ is equivalent to determining a third-order cross-boundary derivative at the midpoint $\vv_6$. By Remark \ref{rem:C3}, this derivative cannot be determined from only the initial data along the edge $\langle \vv_1, \vv_2\rangle$. For the time being, therefore, let us express $c_1,\ldots, c_{64}$ in terms of $c_1, c_2, c_3$ and the known coefficients, writing $a\equiv b$ if $a - b$ is a linear combination of known coefficients.

Using the univariate $C^3$-condition from Figure \ref{fig:univariate2} across the edge $\langle \vv_6, \vv_7\rangle$, one expresses $c_4$ in terms of $c_1$. Similarly one expresses $c_8$ in terms of $c_2$. The univariate smoothness conditions from Figure~\ref{fig:univariate1} along $\langle\vv_4, \vv_6\rangle$ determine the remaining B-coefficients $c_5, c_6, c_7$ along this edge. Using the univariate smoothness conditions from Figure~\ref{fig:univariate2} across the edge $\langle \vv_6, \vv_7\rangle$, and similar relations for $\langle \vv_4, \vv_7\rangle$, one obtains
\begin{align*}
\left(\frac34\right)^3 c_1 \equiv c_4 & \equiv \frac34 c_{19}\equiv \left(\frac34\right)^2 c_{34} \\ 
\left(\frac34\right)^3 (c_1 + \frac12 c_2) \equiv c_5 & \equiv \frac34 c_{20}\equiv \left(\frac34\right)^2 c_{35}\equiv \left(\frac34\right)^3 c_{49}\\
\left(\frac34\right)^4 (c_1 + c_2) \equiv c_6 & \equiv \frac34 c_{21}\equiv \left(\frac34\right)^2 c_{36}\equiv \left(\frac34\right)^3 c_{50}\\ 
\left(\frac34\right)^3 (\frac12 c_1 + c_2) \equiv c_7 & \equiv \frac34 c_{22}\equiv \left(\frac34\right)^2 c_{37}\equiv \left(\frac34\right)^3 c_{51}\\
\left(\frac34\right)^3 c_2 \equiv c_8 & \equiv \frac34 c_{23}\equiv \left(\frac34\right)^2 c_{38}
\end{align*}
Similar expressions hold for the other unknown B-coefficients outside of $D_1(\vv_{10})$. Using the $C^1$-condition from Figure~\ref{fig:bivariateC1} across $\langle \vv_6, \vv_{10}\rangle$ (and similarly for $\langle \vv_4, \vv_{10}\rangle$ and $\langle \vv_5, \vv_{10}\rangle$),
\[ c_{58} \equiv c_1 + \frac13 c_2 + \frac13 c_3,\quad c_{60}\equiv c_2 + \frac13 c_1 + \frac13 c_3,\quad c_{62}\equiv c_3 + \frac13 c_1 + \frac13 c_2, \]
and taking averages of these coefficients (as the $C^1$-condition in Figure~\ref{fig:univariate1}),
\[ c_{59} \equiv \frac{2c_1 + 2c_2 + c_3}{3},\quad c_{61} \equiv \frac{c_1 + 2c_2 + 2c_3}{3},\quad c_{63} \equiv \frac{2c_1 + c_2 + 2c_3}{3}. \]
Using any $C^1$-condition at $\vv_{10}$ (e.g. Figure~\ref{fig:bivariateC1}), we find $c_{64} = \frac59 (c_1 + c_2 + c_3)$.

At this point all unknown B-coefficients are expressed in terms of $c_1,c_2,c_3$. The univariate $C^3$-condition along $\langle \vv_4, \vv_9 \rangle$ from Figure~\ref{fig:univariate3} (and similarly for $\langle \vv_5, \vv_7 \rangle$, $\langle \vv_6, \vv_8 \rangle$) yield
\[ c_1 \equiv (c_2 + c_3)/2,\qquad c_2 \equiv (c_1 + c_3)/2,\qquad c_3 \equiv (c_1 + c_2)/2,\]
so that $c_1\equiv c_2\equiv c_3$. Substituting $c_1\equiv c_2\equiv c_3$ in the $C^3$-condition from Figure~\ref{fig:bivariateC3},
\begin{align*}
0 &\equiv - c_{46} - \frac34 c_{34} + \frac98 c_1 + \frac32 c_{35} - \frac92 c_{49} + \frac{27}{8} c_{58} - c_{36} + \frac92 c_{50}\\
 &\hskip 1cm - \frac{27}{4}c_{59} + \frac{27}{8}c_{64} \equiv \frac{18}{16} c_1
\end{align*}
determines the remaining degree of freedom $c_1$ in terms of the known B-coefficients.

Finally, since the univariate splines $s|_e, \nabla_{\mm} s|_e$, and $\nabla^2_{\mm} s|_e$ are determined from just the functionals along~$e$, the values, first, and second derivatives of the splines on the adjacent triangles agree along $e$. It follows that $\Lambda_{12}$ defines a spline $s$ that is $C^2$-smooth on $\PS$ and $C^3$-smooth on each macro-triangle in $\PS$.
\end{proof}

Similarly, any spline $s\in \SSS_6$ is uniquely determined from the initial data
\begin{equation}\label{eq:InitialData6}
\lambda(s) = s_\lambda,\qquad \lambda\in \Lambda_6,
\end{equation}
which follows from combining Theorem 2.18 and 7.9 in \cite{Lai.Schumaker07}:
\begin{theorem}\label{thm:Macro-Element6}
The set $\Lambda_6$ forms a nodal minimal determining set for $\SSS_6$.
\end{theorem}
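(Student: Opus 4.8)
The plan is to deduce this from the already-proven Theorem~\ref{thm:Macro-Element12} via the inclusion $\SSS_6\subset\SSS_{12}$. First I would record the counting fact that each vertex of $\PS$ carries the $\binom{5}{2}=10$ derivatives up to order three, so that $|\Lambda_6|=30=\dim\SSS_6$; it is therefore enough to prove that $\Lambda_6$ is \emph{determining}, minimality being automatic once the cardinality equals the dimension. The structural observation driving the argument is that $\SSS_6$ is exactly the subspace of $\SSS_{12}$ whose members carry no knot across the three edges of the medial triangle $\langle\ww_{e_1},\ww_{e_2},\ww_{e_3}\rangle$ joining the edge-midpoints, these being present in the $12$-split but absent from the $6$-split. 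A spline in $\SSS_{12}$ already meets $C^3$ across such an edge, so requiring a single polynomial on either side adds precisely the orders $C^4$ and $C^5$; by the smoothness conditions \eqref{eq:smoothnessconditions} these contribute $2+1=3$ relations per edge, hence $9$ in all, matching $\dim\SSS_{12}-\dim\SSS_6=39-30$.

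With this picture the proof has two moves. The first is verbatim the opening of the proof of Theorem~\ref{thm:Macro-Element12}: the vertex functionals fix the disk $D_3(\vv)$ at every vertex (the disk statement quoted after Theorem~\ref{thm:dirder}), and since $s|_e$ is a univariate $C^3$-quintic spline with the single knot $\ww_e$---the join of $s$ at $\ww_e$ inside a macrotriangle is still $C^3$ in the $6$-split---the four coefficients furnished at each endpoint determine $s|_e$ on every edge $e$ of $\PS$. The second move is the reduction proper: assuming every functional of $\Lambda_6$ annihilates $s\in\SSS_6$, I would show that the nine functionals of $\Lambda_{12}\setminus\Lambda_6$---the first-order cross-boundary derivative at each midpoint and the second-order cross-boundary derivatives at the quarterpoints---also vanish on $s$. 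Granting this, all of $\Lambda_{12}$ annihilates $s$, and Theorem~\ref{thm:Macro-Element12} yields $s\equiv0$.

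The hard part is this second move, and it reduces to a single finite linear-algebra check. By Theorem~\ref{thm:Macro-Element12} the space $B\subset\SSS_{12}$ of splines with vanishing vertex data is nine-dimensional and is coordinatized by exactly the nine edge functionals; what must be shown is that the nine medial-edge conditions cutting out $\SSS_6$ restrict on $B$ to a nonsingular $9\times9$ system---equivalently $B\cap\SSS_6=\{0\}$, so that $\SSS_{12}=\SSS_6\oplus B$. I would establish this nonsingularity by the same Bernstein--B\'ezier bookkeeping as in the proof of Theorem~\ref{thm:Macro-Element12}: parametrize $B$ by the nine edge values, propagate them inward across the $C^3$-conditions using Theorem~\ref{thm:dirder} to express the interior B-coefficients, and then read the $C^4$- and $C^5$-conditions across the medial triangle as nine homogeneous relations in the nine parameters, checking (symbolically in \Sage) that they force all parameters to zero. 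The same conclusion is reached more cheaply by combining Theorems~2.18 and~7.9 of \cite{Lai.Schumaker07}, which I would cite as an alternative route.
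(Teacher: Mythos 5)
Your route is genuinely different from the paper's. The paper does not argue via Theorem~\ref{thm:Macro-Element12} at all: its entire proof is the sentence preceding the statement, namely that the claim ``follows from combining Theorem 2.18 and 7.9 in \cite{Lai.Schumaker07}'' --- the vertex derivatives determine the disks $D_3(\vv)$, and the known $C^2$-quintic Powell--Sabin-6 macro-element result of that book does the rest. Your plan instead embeds $\SSS_6$ into $\SSS_{12}$, notes that a spline of $\SSS_6$ annihilated by $\Lambda_6$ lies in the nine-dimensional space $B$ of splines in $\SSS_{12}$ with vanishing vertex data, and reduces everything to $B\cap\SSS_6=\{0\}$ so that Theorem~\ref{thm:Macro-Element12} can be invoked. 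That reduction is structurally sound: the equivalence between $B\cap\SSS_6=\{0\}$ and the determining property of the vertex functionals on $\SSS_6$ is correct given the stated dimensions $39$ and $30$, minimality is indeed automatic from $|\Lambda_6|=30=\dim\SSS_6$, and the argument localizes correctly to each macrotriangle. What your approach would buy, if completed, is a proof that uses only the machinery already built for Theorem~\ref{thm:Macro-Element12}, rather than an external citation.

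The reservation is that, as written, the argument is not closed. The decisive step --- that the extra smoothness conditions across the medial edges restrict to a nonsingular system on $B$ --- is only announced as something to be ``checked symbolically in \Sage''; that check is essentially the whole content of the theorem, and your final sentence falls back on exactly the citation the paper uses, which concedes the point. A smaller imprecision in the setup: each medial edge of the $12$-split is subdivided at one of the interior vertices $\vv_7,\vv_8,\vv_9$ of Figure~\ref{fig:PS12-Labels12}, so forcing a single polynomial across a medial edge imposes the conditions \eqref{eq:smoothnessconditions} for $n=4,5$ on six sub-edges, i.e.\ $18$ relations whose rank is $9$ --- not ``$3$ relations per edge, hence $9$ in all''. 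The count $\dim\SSS_{12}-\dim\SSS_6=9$ tells you the rank of that system, not that the particular nine relations you single out are the independent ones; this matters if you actually set up the $9\times9$ matrix. Either carry out the symbolic check and report it (in the paper's style), or simply quote the PS-6 macro-element theorem as the authors do.
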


\section{Derivation of a subdivision scheme}\label{sec:subdivision}
\noindent It has been observed by many (see e.g. \cite{DynLyche98, Oswald92}) that the geometry of the 12-split lends itself for subdivision into four subtriangles, namely $(\vv_1, \vv_4, \vv_6)$, $(\vv_2, \vv_5, \vv_4)$, $(\vv_3, \vv_6, \vv_5)$, and $(\vv_4, \vv_5, \vv_6)$ in Figure \ref{fig:PS12-Labels12}. Each of these is (refinable to) a 6-split. Moreover, splitting the 6-split in Figure \ref{fig:PS12-Labels6} into four subtriangles yields four new 6-splits, etc. Using this observation, one can derive a Hermite subdivision scheme to compute the $C^1$-smooth quadratic splines from values and first derivatives at the corners and cross-boundary derivatives at the midpoints \cite{DynLyche98}.

In this section we derive a similar scheme for computing any spline $s \in \SSS_{12}$ from the initial data~\eqref{eq:InitialData12}. After computing the values and derivatives at the midpoints in the ``initialization step'' in Figure~\ref{fig:PS12-2}, one obtains a refined description of $s$ by repeatedly applying the ``subdivision step'' in Figure~\ref{fig:PS12-3}.

\begin{figure}
\begin{center}
\subfloat[]{\includegraphics[scale=0.69]{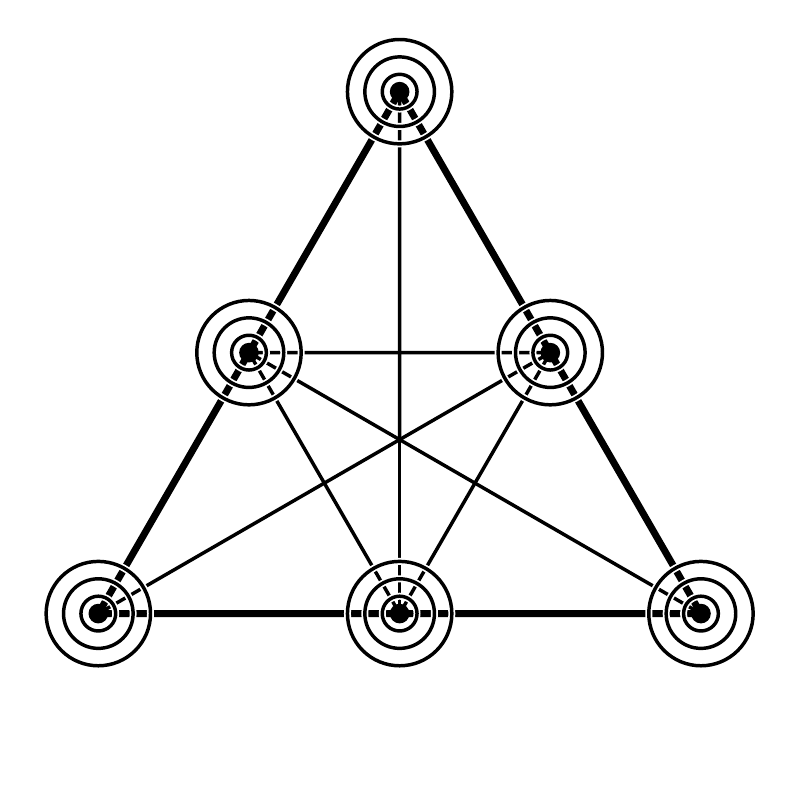}\label{fig:PS12-2}}\qquad 
\subfloat[]{\includegraphics[scale=0.69]{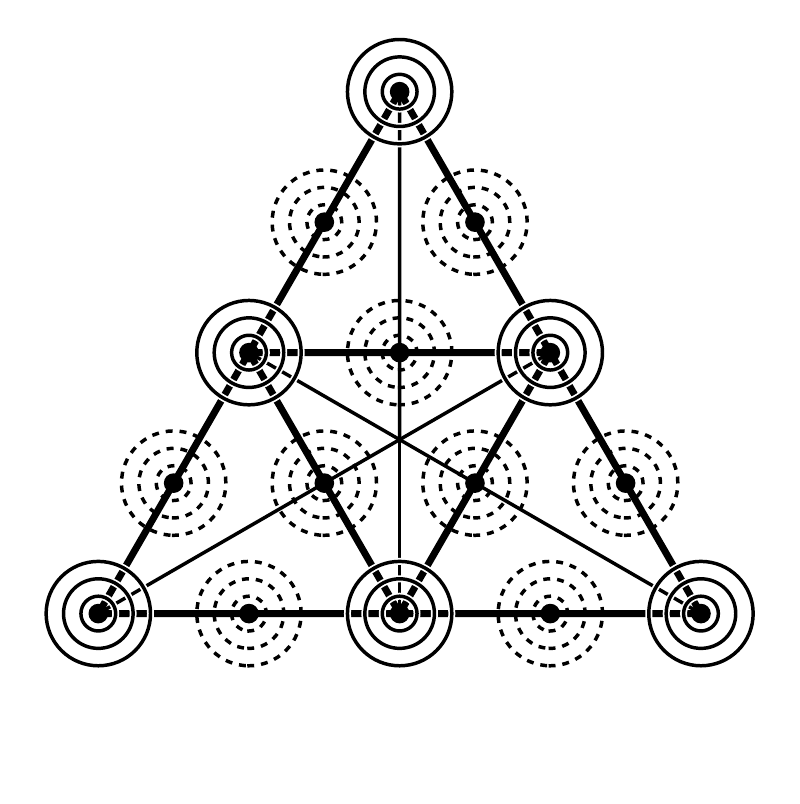}\label{fig:PS12-3}}
\end{center}
\caption{Known data after the `initialization step' (left) and additional known data (dashed) after the `subdivision step' on the four subtriangles (right).}\label{fig:PS12}
\end{figure}

\begin{remark}
For two adjacent triangles with common edge $\langle \vv_Y, \vv_Z \rangle$, specifying cross-boundary derivatives in the normal direction $\nn_X$ at the midpoints and quarterpoints has the advantage of only depending on the common edge. However, since tangential derivatives at the midpoints and quarterpoints can easily be computed from the tangential derivatives at the corners $\vv_Y, \vv_Z$ (see \eqref{eq:m-vs.-n} and the rules below), any other directional derivative can be computed by a simple change of basis.

To keep the rules as short as possible, we specify directional derivatives in the medial directions $\mm_X$ and tangential directions~$\tt_X$. Because the edges in the 12-split are divided at the midpoints, each of the four subdivided triangles in Figure \ref{fig:PS12-3} will have the same tangential and medial vectors, albeit scaled by a factor $\pm\frac12$. Thus directional derivatives computed on previous levels can be reused.
\end{remark}

It follows from Theorem \ref{thm:Macro-Element12} (resp. Theorem \ref{thm:Macro-Element6}) that the B-coefficients of any spline in $S_{12}$ (resp. $S_6$) can be uniquely determined from the initial data \eqref{eq:InitialData12} (resp. \eqref{eq:InitialData6}). Moreover, Theorem \ref{thm:dirder} implies that any directional derivative of any order of the spline at any point can be expressed in terms of the B-coefficients. It follows that there exist initialization rules and subdivision rules for evaluating the spline as described above. In the next two sections we derive these rules.

By Remark~\ref{rem:C3}, the initialization and subdivision rule for the third-order cross-boundary derivative at the midpoint $\ww_e$ is expected to be complicated, involving functionals with carrier outside of $e$. Because of this, the formulas for the case of (locally) $C^3$-quintics are considerably more complex than those for the $C^1$-quadratics. As the computations are too large to carry out by hand, we made use of the computer algebra system \Sage~\cite{Sage}. The resulting worksheet with implementation and examples can be downloaded from the website of the second author~\cite{WebsiteGeorg}.

\subsection*{Initialization}
\noindent On a single triangle $\PS$, with the 252 unknown B-coefficients
\[ \{c_{ijk}^l\ :\ l = 1, 2, \ldots, 12,\ i + j + k = 5\}, \]
the initial conditions \eqref{eq:InitialData12} and smoothness conditions \eqref{eq:smoothnessconditions} for the 15 interior edges in $\PSB$ form a linear system with $39 + 15\cdot (6 + 5 + 4 + 3) = 309$ equations. By the linear independence of the functionals in $\Lambda_{12}$, this system has a unique solution. Solving the system and applying Theorem \ref{thm:dirder}, we find initialization rules for computing the data in Figure \ref{fig:PS12-2} from the initial data~\eqref{eq:InitialData12}.

Given are, at the corners $\vv_A, \vv_B, \vv_C$, values and derivatives of $f$ up to order three, first-order cross-boundary derivatives $f^{\mm_C}_{AB}, f^{\mm_A}_{BC}, f^{\mm_B}_{CA}$ at the midpoints, and second-order cross-boundary derivatives
\[ f^{\mm_C \mm_C}_{AAB}, f^{\mm_C \mm_C}_{ABB},\qquad f^{\mm_A \mm_A}_{BBC}, f^{\mm_A \mm_A}_{BCC},\qquad f^{\mm_B \mm_B}_{CCA}, f^{\mm_B \mm_B}_{CAA}\]
at the quarter points as in Figure \ref{fig:PS12-1}. With the short-hands $\tt := \tt_C, \mm := \mm_C$, and \eqref{eq:shorthand}, the value and derivatives at the midpoint $\vv_{AB}$ are determined by the rules (see the worksheet)
\begin{align*}
f_{AB} = & \frac{1}{2} f_{A+B} + \frac{7}{40}f^{\tt}_{A-B} + \frac{1}{40} f^{\tt\tt}_{A+B} + \frac{1}{640} f^{\tt\tt\tt}_{A-B}\\ 
f^{\tt}_{AB} = & -\frac{5}{2} f_{A-B} - \frac{3}{4} f^\tt_{A+B} - \frac{3}{32} f^{\tt\tt}_{A-B} - \frac{1}{192}f^{\tt\tt\tt}_{A+B}\\ 
f^{\tt\tt}_{AB}  = & -2f^\tt_{A-B} - \frac12 f^{\tt\tt}_{A+B} - \frac{1}{24} f^{\tt\tt\tt}_{A-B}\\ 
f^{\tt\mm}_{AB}  = & -2f^\mm_{A-B} - \frac12 f^{\tt\mm}_{A+B} - \frac{1}{24} f^{\tt\tt\mm}_{A-B}\\ 
f^{\mm\mm}_{AB}  = & f^{\mm\mm}_{AAB + ABB} - \frac12 f^{\mm\mm}_{A+B} - \frac{1}{16} f^{\tt\mm\mm}_{A-B}\\ 
f^{\tt\tt\tt}_{AB} = & 120f_{A-B} + 60f^\tt_{A+B} + \frac{21}{2} f^{\tt\tt}_{A-B} + \frac{3}{4} f^{\tt\tt\tt}_{A+B}\\
f^{\tt\tt\mm}_{AB} = & -48 f^\mm_{AB} + 24 f^\mm_{A+B} + 6 f^{\tt\mm}_{A-B} + \frac{1}{2} f^{\tt\tt\mm}_{A+B}\\ 
f^{\tt\mm\mm}_{AB} = & -8f^{\mm\mm}_{AAB-ABB} + 4f^{\mm\mm}_{A-B} + \frac12 f^{\tt\mm\mm}_{A+B}\\
f^{\mm\mm\mm}_{AB} = &\  45 f_{A+B} + 45 f^\mm_{A+B} + 36 f^\tt_{A-B} - \frac{217}{16} f^{\mm \mm}_{A+B} + \frac{153}{16} f^{\mm\tt}_{A-B}\\
 & + \frac{567}{64} f^{\tt\tt}_{A+B} + \frac{25}{64} f^{\mm\mm\mm}_{A+B} - \frac{251}{128} f^{\mm\mm\tt}_{A-B} + \frac{43}{256} f^{\mm\tt\tt}_{A+B} + \frac{303}{512} f^{\tt\tt\tt}_{A-B}\\
 & - 90 f_C - 24 f^\mm_C - \frac{23}{8} f^{\mm \mm}_C - \frac{135}{32} f^{\tt\tt}_C - \frac{5}{32} f^{\mm\mm\mm}_C - \frac{79}{128} f^{\mm\tt\tt}_C\\
 & -108 f^\mm_{AB} + 48 f^{\mm_A}_{BC} + 48 f^{\mm_B}_{CA}\\
 &   + 15 f^{\mm\mm}_{AAB+ABB} -7 f^{\mm_A\mm_A}_{BBC} + f^{\mm_A\mm_A}_{BCC} -7 f^{\mm_B\mm_B}_{CAA} +  f^{\mm_B\mm_B}_{CCA}.
\end{align*}
Note that only the third-order cross-boundary derivative $f^{\mm\mm\mm}_{AB}$ involves functionals with carrier outside of $\langle \vv_A, \vv_B\rangle$, as explained in Remark \ref{rem:C3}.

\subsection*{Subdivision}
\noindent On a single triangle $\PS$, with the 126 unknown B-coefficients
\[ \{c_{ijk}^l\ :\ l = 1, 2, \ldots, 6,\ i + j + k = 5\},\]
the initial conditions \eqref{eq:InitialData6} and smoothness conditions \eqref{eq:smoothnessconditions} for the 6 interior edges in $\PSA$ form a linear system with $30 + 6\cdot (6 + 5 + 4 + 3) = 138$ equations. By the linear independence of the functionals in $\Lambda_6$, this system has a unique solution. Solving the system and applying Theorem \ref{thm:dirder}, we find subdivision rules for computing the dashed data in Figure \ref{fig:PS12-3} from the data in Figure~\ref{fig:PS12-2} (see the worksheet),
\begin{align*}
f_{AB}             = &\,  \frac12 f_{A+B} + \frac{7}{40} f^{\tt}_{A-B} + \frac{1}{40} f^{\tt\tt}_{A+B} + \frac{1}{640} f^{\tt\tt\tt}_{A-B}\\
f^{\tt}_{AB}       = &\, -\frac52 f_{A-B} - \frac34      f^{\tt}_{A+B} - \frac{3}{32} f^{\tt\tt}_{A-B} - \frac{1}{192} f^{\tt\tt\tt}_{A+B}\\
f^{\mm}_{AB}       = &\, \frac12 f^\mm_{A+B} + \frac{5}{32} f^{\mm\tt}_{A-B} + \frac{1}{64} f^{\mm\tt\tt}_{A+B} \\
f^{\tt\tt   }_{AB} = &\, -2 f^\tt_{A-B} - \frac{1}{2} f^{\tt\tt}_{A+B} - \frac{1}{24} f^{\tt\tt\tt}_{A-B} \\
f^{\mm\tt   }_{AB} = &\, -2 f^\mm_{A-B} - \frac{1}{2} f^{\mm\tt}_{A+B} - \frac{1}{24} f^{\mm\tt\tt}_{A-B} \\
f^{\mm\mm   }_{AB} = &\, \frac12 f^{\mm\mm}_{A+B} + \frac18 f^{\tt\mm\mm}_{A-B}\\
f^{\tt\tt\tt}_{AB} = &\,  120 f_{A-B} +  60 f^\tt_{A+B} + \frac{21}{2} f^{\tt\tt}_{A-B} + \frac34 f^{\tt\tt\tt}_{A+B} \\
f^{\mm\tt\tt}_{AB} = &\, -\frac14 f^{\mm\tt\tt}_{A+B} - \frac32 f^{\mm\tt}_{A-B}\\
f^{\mm\mm\tt}_{AB} = &\, -\frac14 f^{\mm\mm\tt}_{A+B} - \frac32 f^{\mm\mm}_{A-B}\\
f^{\mm\mm\mm}_{AB} = & + 45 f_{A+B}  + 18 f^{\tt}_{A-B} - 21 f^{\mm}_{A+B} + \frac{45}{16} f^{\tt\tt}_{A+B} - \frac{63}{8} f^{\tt\mm}_{A-B} \\
  & + \frac{15}{4} f^{\mm\mm}_{A+B} + \frac{3}{16} f^{\tt\tt\tt}_{A-B} - \frac78 f^{\tt\tt\mm}_{A+B} + \frac54 f^{\tt\mm\mm}_{A-B} + \frac14 f^{\mm\mm\mm}_{A+B} \\
  &  - 90 f_C - 48 f^{\mm}_C + \frac98 f^{\tt\tt}_C - \frac{21}{2} f^{\mm\mm}_C  + \frac14 f^{\tt\tt\mm}_C - f^{\mm\mm\mm}_C.
\end{align*}

\begin{remark}
Taking convex combinations of the rules for the $C^1$-quadratic and $C^3$-quintic schemes, one arrives at a family of Hermite interpolatory subdivision schemes. A general member of this family will produce limit functions that are not piecewise polynomial.
\end{remark}

\section{Implementation and experimentation}
\noindent In this section we apply the scheme to compute some example splines in some example triangulations. For details we refer to the \Sage~worksheet.

\begin{figure}
\begin{center}
\includegraphics[scale=0.10, bb = 80 72 670 850, clip = True]{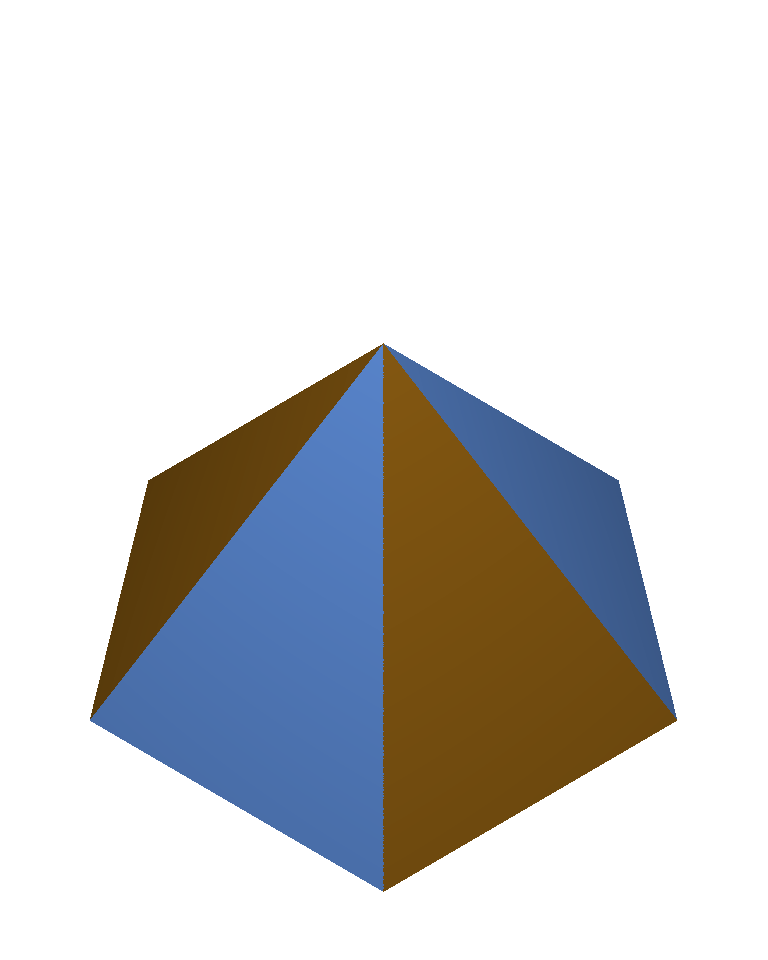}\quad
\includegraphics[scale=0.10, bb = 80 72 670 850, clip = True]{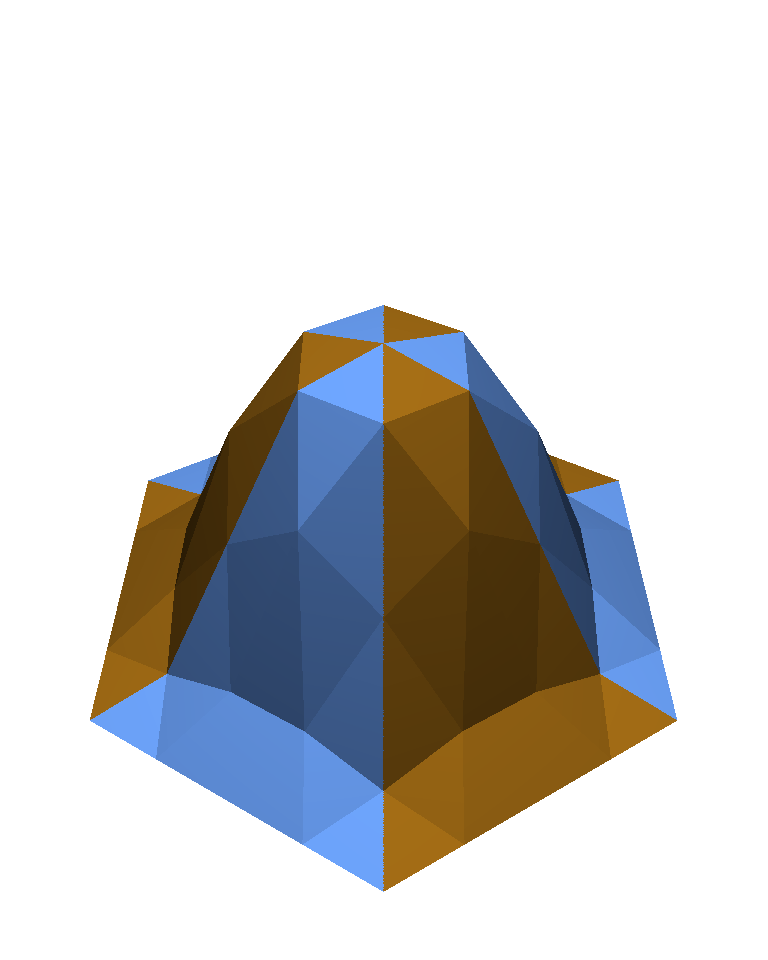}\quad
\includegraphics[scale=0.10, bb = 80 72 670 850, clip = True]{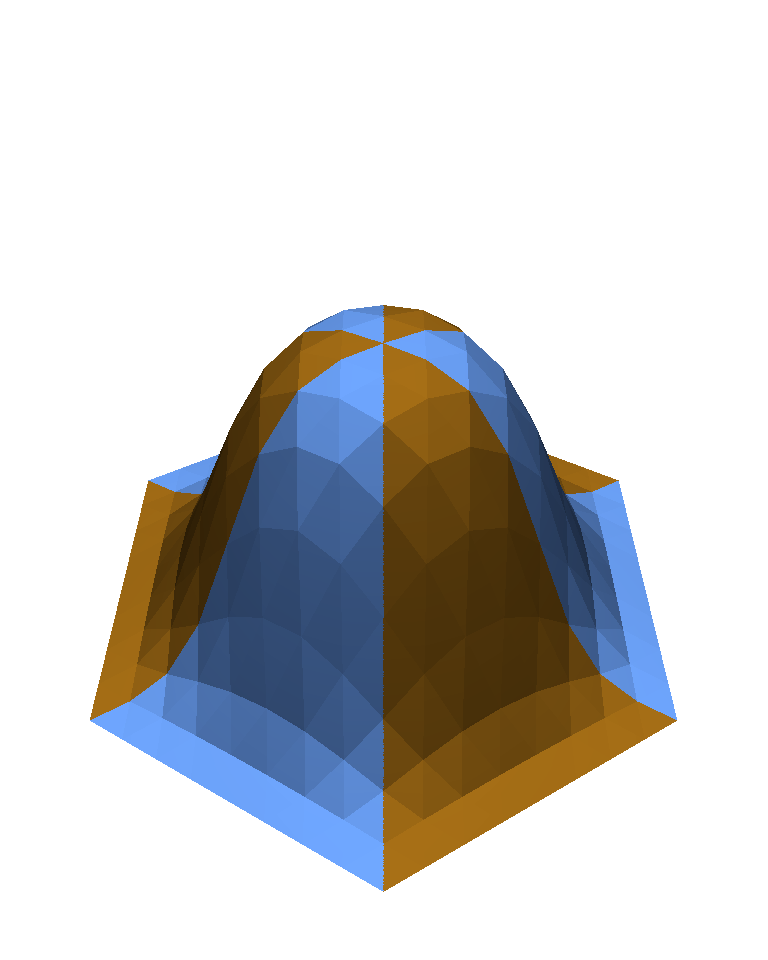}\quad
\includegraphics[scale=0.10, bb = 80 72 670 850, clip = True]{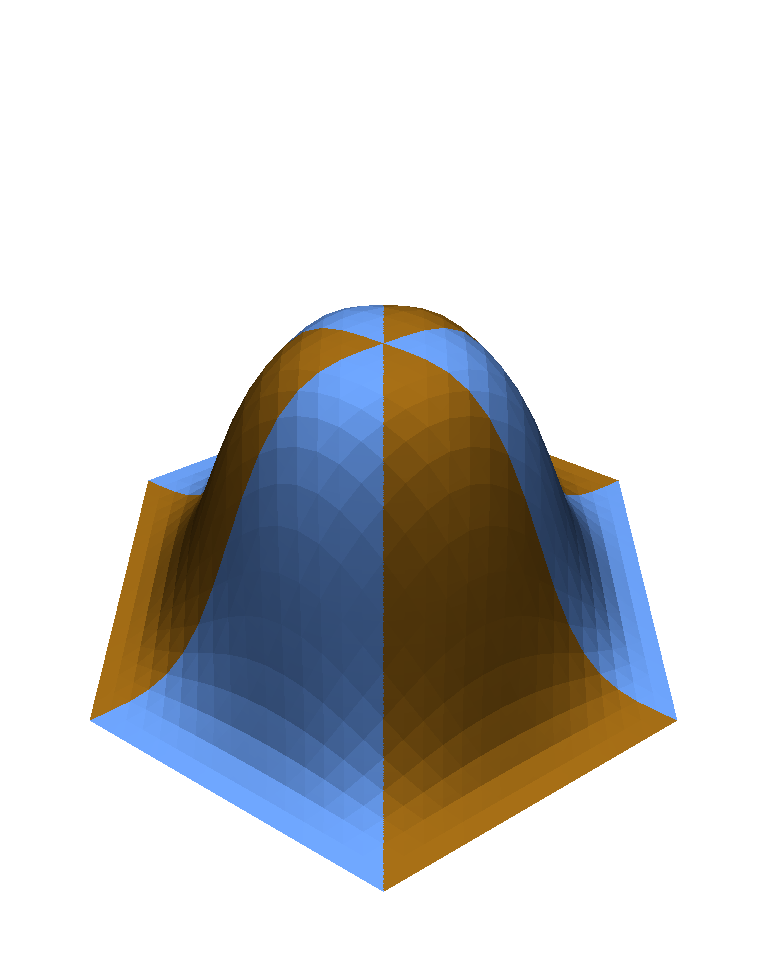}\quad
\includegraphics[scale=0.10, bb = 80 72 670 850, clip = True]{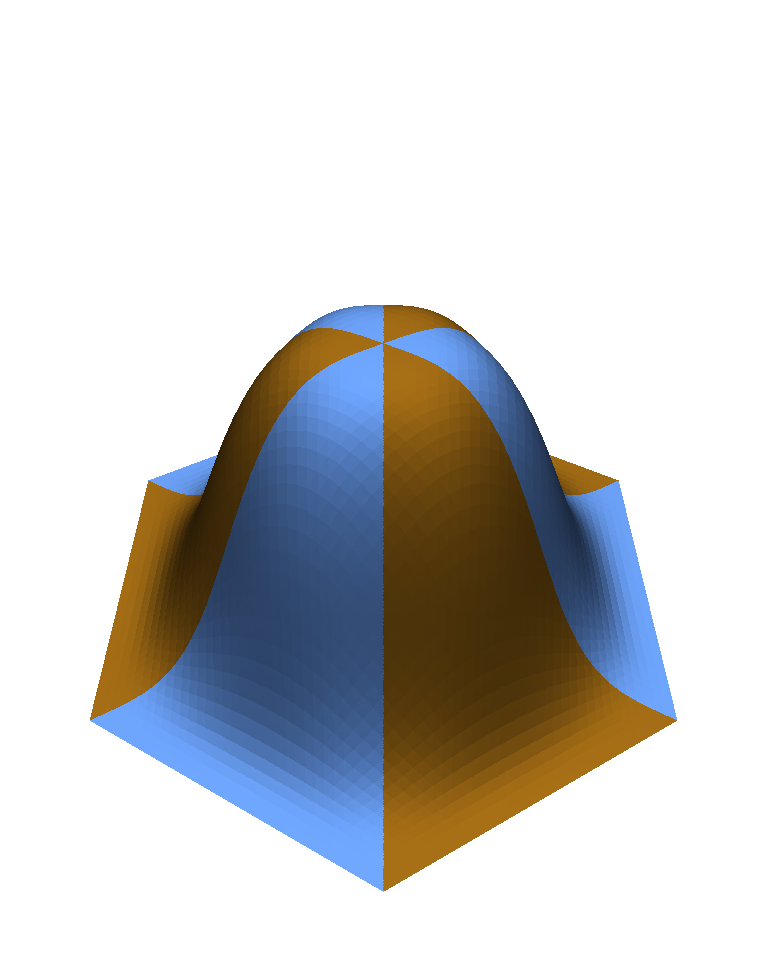}\\
\includegraphics[scale=0.10, bb = 80 72 670 850, clip = True]{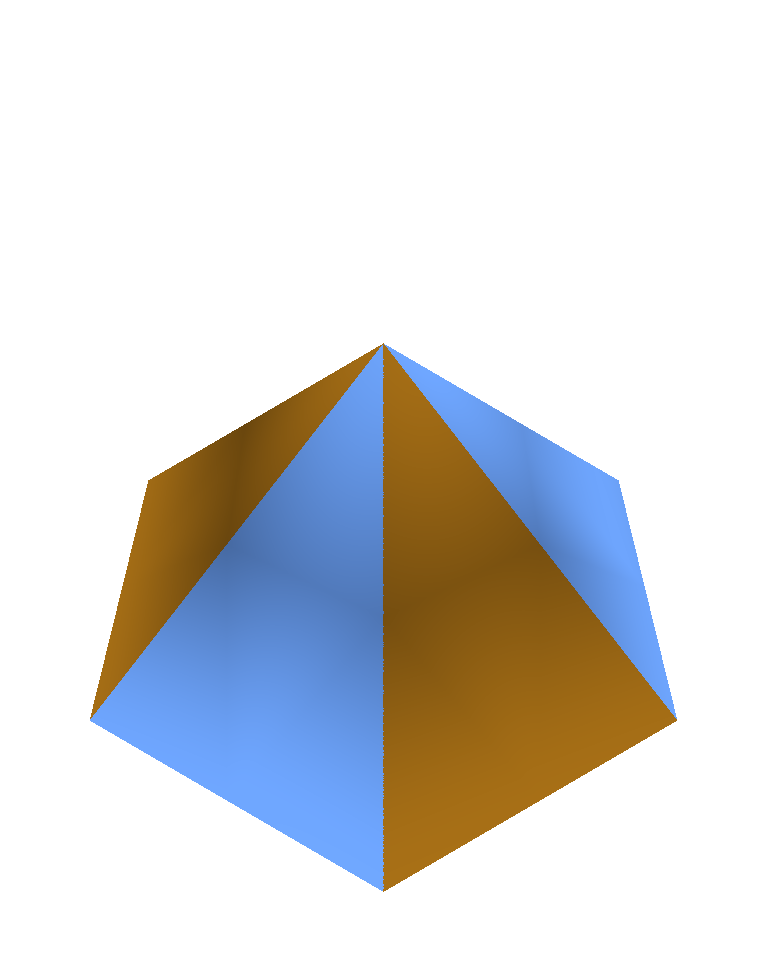}\quad
\includegraphics[scale=0.10, bb = 80 72 670 850, clip = True]{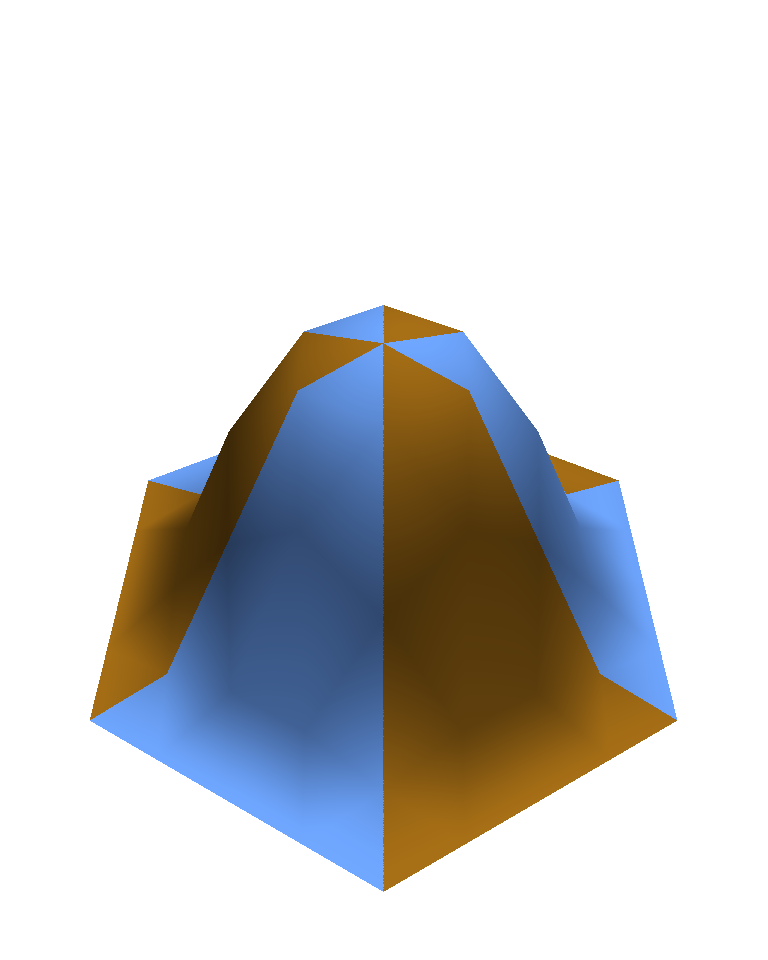}\quad
\includegraphics[scale=0.10, bb = 80 72 670 850, clip = True]{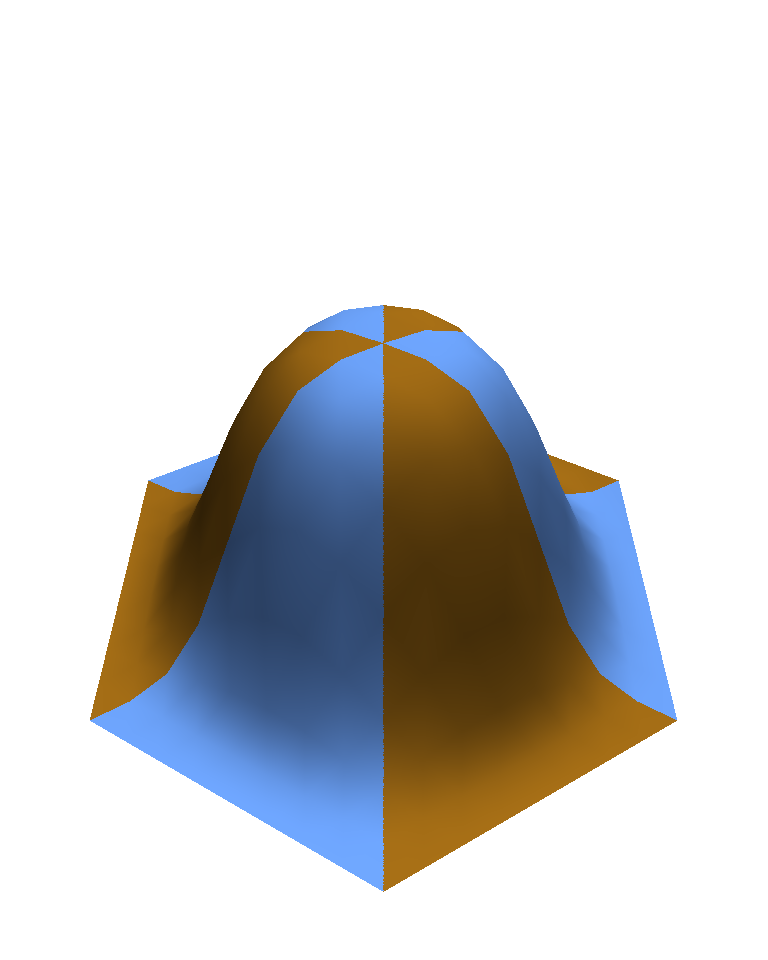}\quad
\includegraphics[scale=0.10, bb = 80 72 670 850, clip = True]{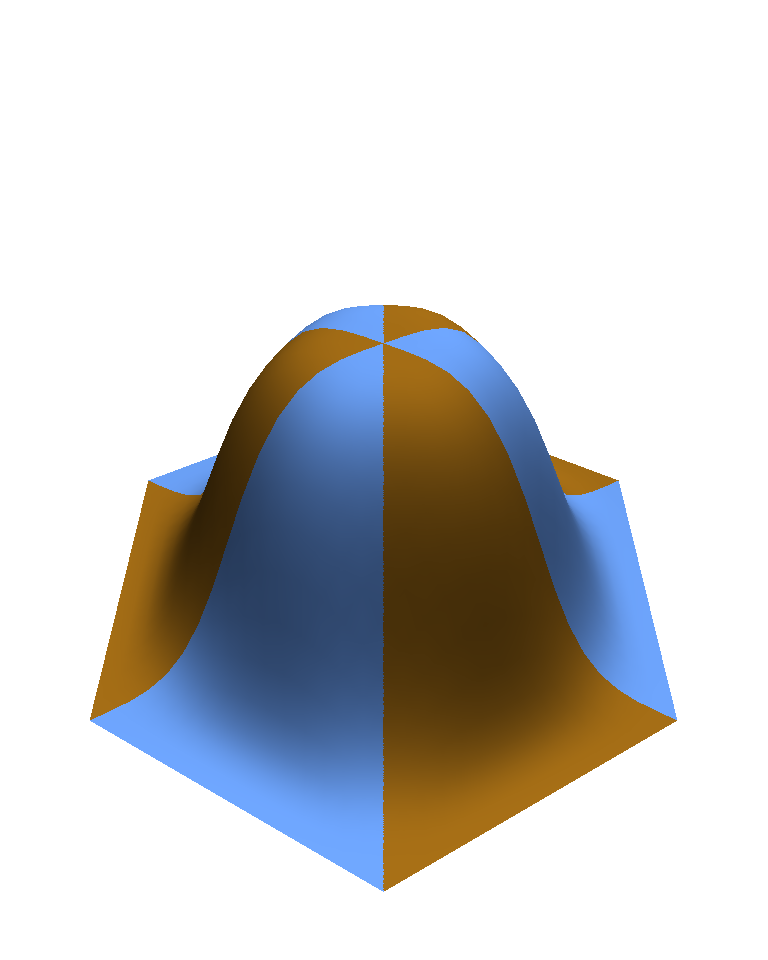}\quad
\includegraphics[scale=0.10, bb = 80 72 670 850, clip = True]{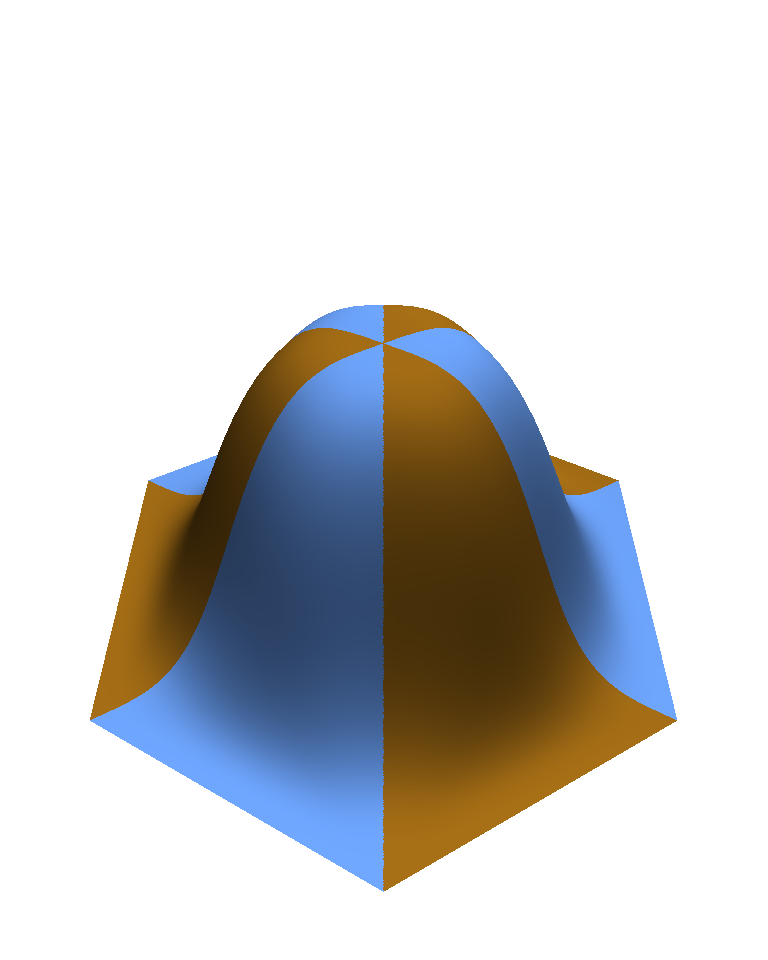}\\
\end{center}
\caption{The refined data after 1, 2, 3, 4, and 5 iterations, ray-traced using flat shading (top) and Phong shading (bottom) in Example \ref{ex:hexagon}.}\label{fig:flatPhong}
\end{figure}

\begin{example}\label{ex:hexagon}
Let $\vv_i := \big(\cos(2\pi i/6), \sin(2\pi i/6) \big)$, with $i = 1,2, \ldots, 6$, be the vertices of a regular hexagon centred at the origin $\vv_0 := (0, 0)$. Consider the triangulation consisting of the triangles $T^i := \langle \vv_0, \vv_i, \vv_{i+1}\rangle$, with $i = 1, 2, \ldots, 5$, and $T^6 := \langle \vv_0, \vv_6, \vv_{1}\rangle$. We consider the spline on this triangulation defined by zero initial data in \eqref{eq:InitialData12}, except for the value $1$ for the point evaluation at~$\vv_0$.

After the initialization step, the first 5 iterations of the subdivision step are shown in the top row of Figure \ref{fig:flatPhong}. However, in general we do not need this many iterations. Since the Hermite subdivision scheme also computes first derivatives, we know the surface normal at each vertex. Linearly interpolating these normals along each triangle and ray-tracing with a Phong shading model yields a seemingly flawless visualization after 2-3 iterations, while for flat shading 4-5 iterations are needed; see Figure \ref{fig:flatPhong} for a comparison.
\end{example}

\begin{example}\label{ex:derivatives}
Consider the triangulation $\PS$ with two triangles $T = \langle \vv_1, \vv_2, \vv_3\rangle$ and $\tilde{T} = \langle \tilde{\vv}_1, \vv_2, \vv_3\rangle$, with
\[ \vv_1 = (0,0),\quad \vv_2 = (1,0), \quad \vv_3 = (0,1),\quad \tilde{\vv}_1 = (1,1).\]
We consider the spline $f\in \SSS_{12}$ on this triangulation defined by zero initial data in \eqref{eq:InitialData12}, except for the value $1$ for the point evaluation at~$\vv_3$. Since the Hermite scheme computes derivatives up to order three, it is easy to plot the third derivatives of $f$ in Figure~\ref{fig:thirdderivatives}. Note that all third derivatives are continuous on $\PS$, with the exception of the third-order cross-boundary derivative $f^{\mm\mm\mm}$, as expected. 

Applying the scheme to data sampled from a random quintic polynomial on $\PS$ reproduced this polynomial exactly in exact arithmetic, and up to machine accuracy in floating point arithmetic.
\end{example}

\begin{figure}
\begin{center}
\includegraphics[scale=0.13, bb = 72 32 688 520, clip = True]{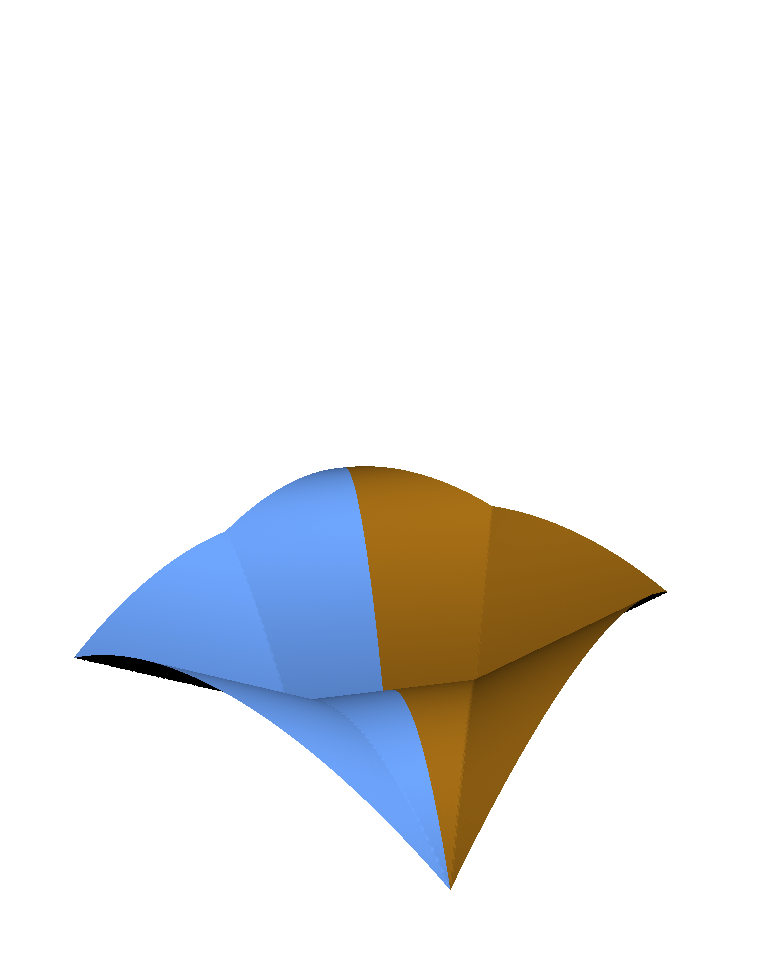}\quad
\includegraphics[scale=0.13, bb = 72 32 688 520, clip = True]{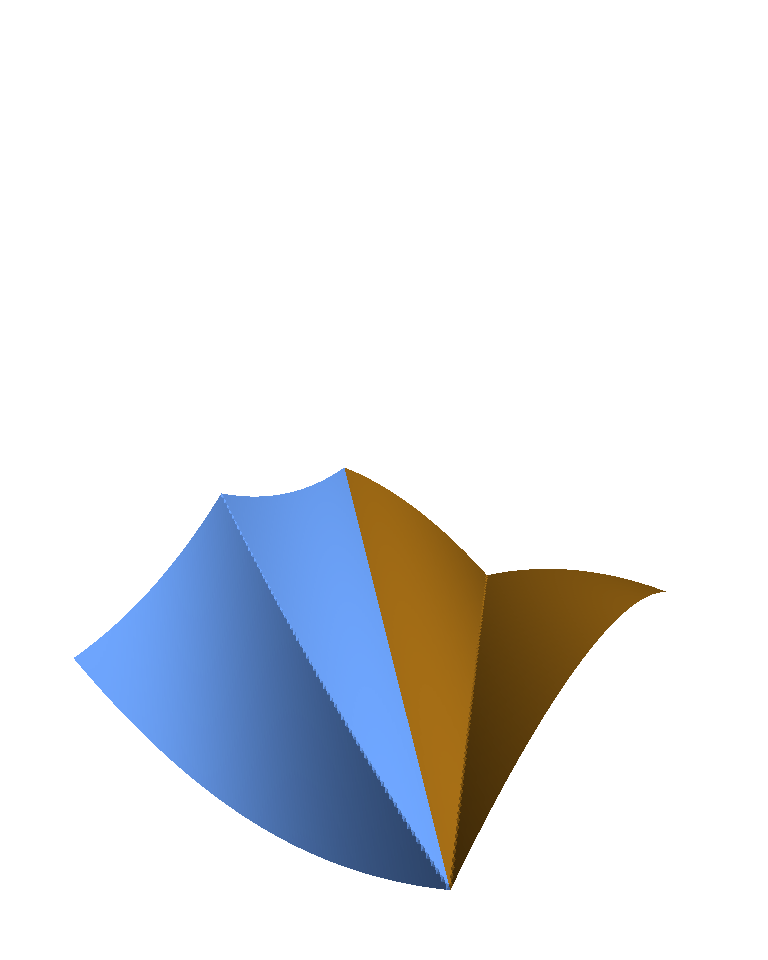}\quad
\includegraphics[scale=0.13, bb = 72 32 688 560, clip = True]{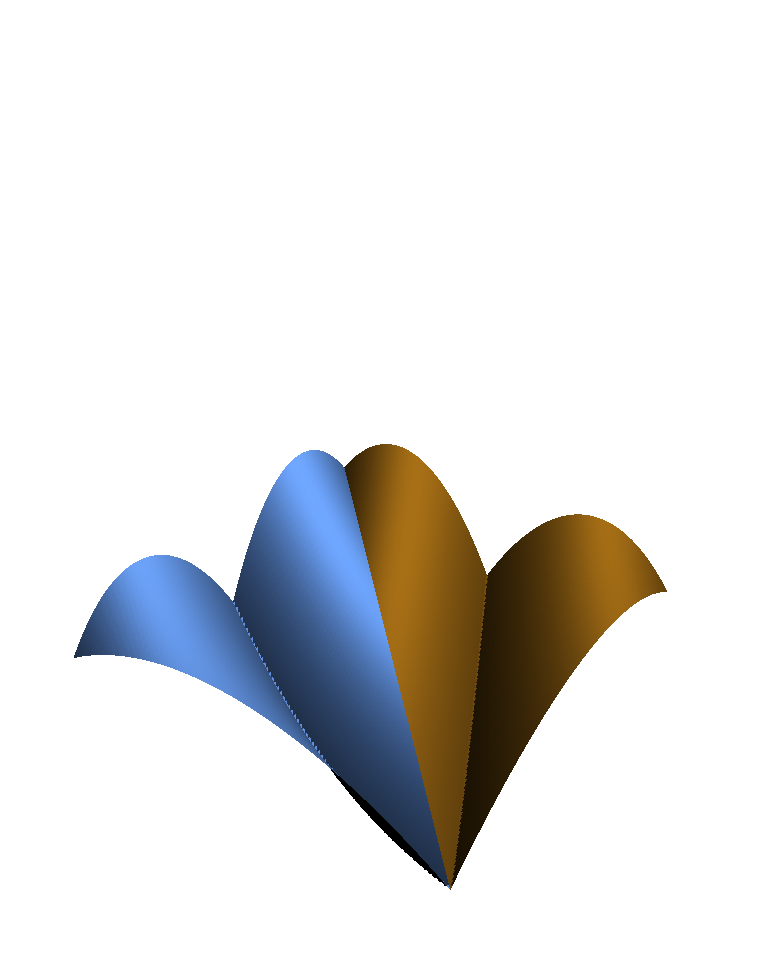}\quad
\includegraphics[scale=0.13, bb = 72 50 688 560, clip = True]{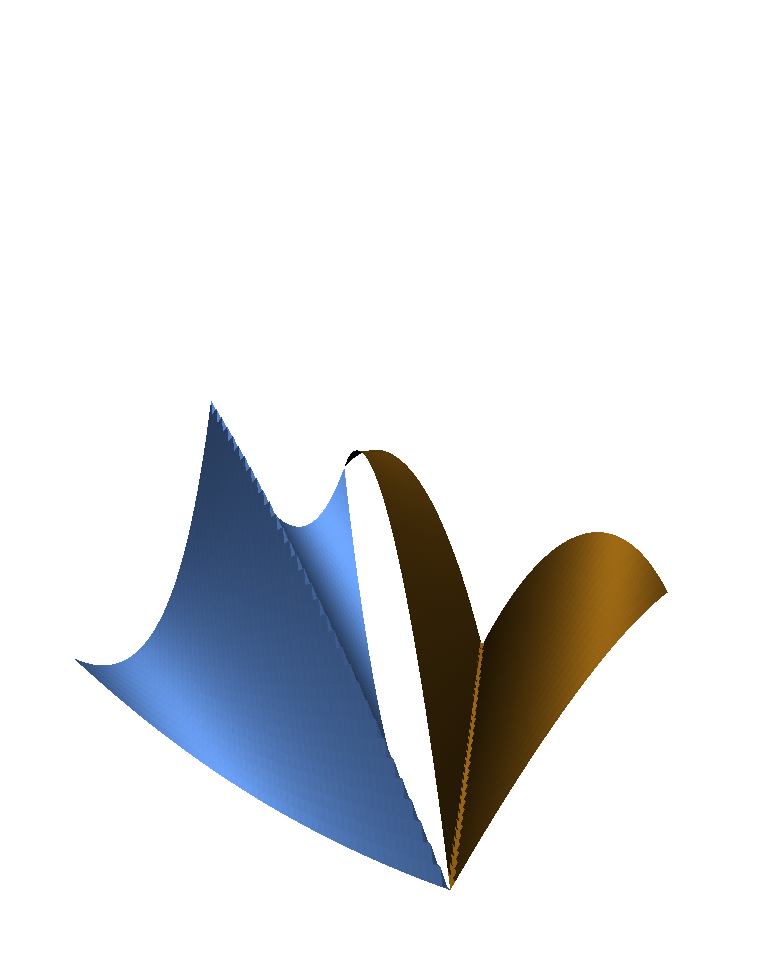}
\end{center}
\caption{From left to right, the third-order derivatives $f^{\tt\tt\tt}, f^{\tt\tt\mm}, f^{\tt\mm\mm}, f^{\mm\mm\mm}$ of the limit function $f$ in Example~\ref{ex:derivatives}.}\label{fig:thirdderivatives}
\end{figure}

\section{Conclusion and a final remark}
\noindent We have introduced a nodal macro-element on the 12-split for the space of quintic splines that are locally $C^3$ and globally $C^2$. For quickly evaluating any such spline, a Hermite subdivision scheme is derived, implemented, and tested in the computer algebra system \Sage. Using the available first derivatives for Phong shading, visually appealing plots can be generated after just a couple of refinements.

\begin{remark}
It would be natural to consider the macro-element introduced by Powell and Sabin and the macro-element in this paper as the first two entries in the following sequence. Let $\SSS^{2n-1, n}_{3n-1}$ be the space of piecewise polynomials of degree $3n-1$ with global $C^n$-smoothness and $C^{2n-1}$-smoothness within each macro-triangle. We define a set $\Lambda^n$ of functionals on $\SSS^{2n-1, n}_{3n-1}$ as follows. For every vertex $\vv$ of $\PS$, the set $\Lambda^n$ contains point evaluations at $\vv$ of the spline and of its partial derivatives up to order $2n-1$. The remaining elements of $\Lambda^n$ are, for each edge $e$ in $\PS$ and $k = 1,\ldots, n$, point evaluations of $k$ cross-boundary derivatives of order $k$ along $e$.

Although the cardinality of $\Lambda^n$ matches $\dim \SSS^{2n-1, n}_{3n-1}$, which can be shown to be $(15n^2 + 9n)/2$, it unfortunately does not in general form a basis for the dual to $\SSS^{2n-1, n}_{3n-1}$. The pattern breaks down first for $n = 3$, for which $\SSS^{5,3}_8$ has dimension $81$. While on each macro-triangle the 81 functionals in $\Lambda^3$ are linearly independent in the space of $C^4$-smooth octics, they are dependent in the space of $C^5$-smooth octics, with corank three.
\end{remark}

\section{Acknowledgments}
\noindent We wish to express our gratitude to the referees for their useful comments, which helped to improve the presentation of this paper. The second author was supported by a FRINATEK grant, project number 222335, from the Research Council of Norway.

\begin{bibdiv}
\begin{biblist}

\bib{Alfeld00}{article}{
   author={Alfeld, Peter},
   title={Bivariate spline spaces and minimal determining sets},
   note={Dedicated to Professor Larry L. Schumaker on the occasion of his
   60th birthday},
   journal={J. Comput. Appl. Math.},
   volume={119},
   date={2000},
   number={1-2},
   pages={13--27},
   issn={0377-0427},
}

\bib{Alfeld.Schumaker02}{article}{
   author={Alfeld, Peter},
   author={Schumaker, Larry L.},
   title={Smooth macro-elements based on Powell-Sabin triangle splits},
   journal={Adv. Comput. Math.},
   volume={16},
   date={2002},
   number={1},
   pages={29--46},
   issn={1019-7168},
}

\bib{Cohen.Lyche.Riesenfeld13}{article}{
   author={Cohen, Elaine},
   author={Lyche, Tom},
   author={Riesenfeld, Richard F.},
   title={A B-spline-like basis for the Powell-Sabin 12-split based on
   simplex splines},
   journal={Math. Comp.},
   volume={82},
   date={2013},
   number={283},
   pages={1667--1707},
   issn={0025-5718},
}

\bib{D.L.M.S:S00}{article}{
   author={D{\ae}hlen, Morten},
   author={Lyche, Tom},
   author={M{\o}rken, Knut},
   author={Schneider, Robert},
   author={Seidel, Hans-Peter},
   title={Multiresolution analysis over triangles, based on quadratic
   Hermite interpolation},
   note={Dedicated to Professor Larry L. Schumaker on the occasion of his
   60th birthday},
   journal={J. Comput. Appl. Math.},
   volume={119},
   date={2000},
   number={1-2},
   pages={97--114},
   issn={0377-0427},
}

\bib{DynLyche98}{article}{
   author={Dyn, Nira},
   author={Lyche, Tom},
   title={A Hermite subdivision scheme for the evaluation of the
   Powell-Sabin $12$-split element},
   conference={
      title={Approximation theory IX, Vol. 2},
      address={Nashville, TN},
      date={1998},
   },
   book={
      series={Innov. Appl. Math.},
      publisher={Vanderbilt Univ. Press},
      place={Nashville, TN},
   },
   date={1998},
   pages={33--38},
}

\bib{Davydov.Stevenson05}{article}{
   author={Davydov, Oleg},
   author={Stevenson, Rob},
   title={Hierarchical Riesz bases for $H^s(\Omega),\
   1<s<{5\over2}$},
   journal={Constr. Approx.},
   volume={22},
   date={2005},
   number={3},
   pages={365--394},
   issn={0176-4276},
}

\bib{Davydov.Yeo13}{article}{
   author={Davydov, Oleg},
   author={Yeo, Wee Ping},
   title={Refinable $C^2$ piecewise quintic polynomials on
   Powell-Sabin-12 triangulations},
   journal={J. Comput. Appl. Math.},
   volume={240},
   date={2013},
   pages={62--73},
   issn={0377-0427},
}

\bib{Goodman.Hardin06}{article}{
   author={Goodman, Tim},
   author={Hardin, Doug},
   title={Refinable multivariate spline functions},
   conference={
      title={Topics in multivariate approximation and interpolation},
   },
   book={
      series={Stud. Comput. Math.},
      volume={12},
      publisher={Elsevier B. V., Amsterdam},
   },
   date={2006},
   pages={55--83},
}

\bib{Hong.Schumaker04}{article}{
   author={Hong, Don},
   author={Schumaker, Larry L.},
   title={Surface compression using a space of $C^1$ cubic splines with a
   hierarchical basis},
   note={Geometric modelling},
   journal={Computing},
   volume={72},
   date={2004},
   number={1-2},
   pages={79--92},
   issn={0010-485X},
}

\bib{Jia.Liu08}{article}{
   author={Jia, Rong-Qing},
   author={Liu, Song-Tao},
   title={$C^1$ spline wavelets on triangulations},
   journal={Math. Comp.},
   volume={77},
   date={2008},
   number={261},
   pages={287--312 (electronic)},
   issn={0025-5718},
}

\bib{Lai.Schumaker03}{article}{
   author={Lai, Ming-Jun},
   author={Schumaker, Larry L.},
   title={Macro-elements and stable local bases for splines on Powell-Sabin
   triangulations},
   journal={Math. Comp.},
   volume={72},
   date={2003},
   number={241},
   pages={335--354},
   issn={0025-5718},
}
\bib{Lai.Schumaker07}{book}{
   author={Lai, Ming-Jun},
   author={Schumaker, Larry L.},
   title={Spline functions on triangulations},
   series={Encyclopedia of Mathematics and its Applications},
   volume={110},
   publisher={Cambridge University Press},
   place={Cambridge},
   date={2007},
   pages={xvi+592},
   isbn={978-0-521-87592-9},
   isbn={0-521-87592-7},
}

\bib{Maes.Bultheel06}{article}{
   author={Maes, Jan},
   author={Bultheel, Adhemar},
   title={$C^1$ hierarchical Riesz bases of Lagrange type on Powell-Sabin
   triangulations},
   journal={J. Comput. Appl. Math.},
   volume={196},
   date={2006},
   number={1},
   pages={1--19},
   issn={0377-0427},
}

\bib{WebsiteGeorg}{article}{
   author={Muntingh, Georg},
   title={Personal Website},
   eprint={https://sites.google.com/site/georgmuntingh/academics/software}
}

\bib{Oswald92}{article}{
   author={Oswald, Peter},
   title={Hierarchical conforming finite element methods for the biharmonic
   equation},
   journal={SIAM J. Numer. Anal.},
   volume={29},
   date={1992},
   number={6},
   pages={1610--1625},
   issn={0036-1429},
}

\bib{Powell.Sabin77}{article}{
   author={Powell, Michael J. D.},
   author={Sabin, Malcolm A.},
   title={Piecewise quadratic approximations on triangles},
   journal={ACM Trans. Math. Software},
   volume={3},
   date={1977},
   number={4},
   pages={316--325},
   issn={0098-3500},
}

\bib{Sage}{book}{
   author={Stein, William A.},
   author={others},
   organization = {The Sage Development Team},
   title = {{S}age {M}athematics {S}oftware ({V}ersion 4.7.2)},
   eprint = {{\tt http://www.sagemath.org}},
   date = {2012},
}

\bib{Schumaker.Sorokina06}{article}{
   author={Schumaker, Larry L.},
   author={Sorokina, Tatyana},
   title={Smooth macro-elements on Powell-Sabin-12 splits},
   journal={Math. Comp.},
   volume={75},
   date={2006},
   number={254},
   pages={711--726 (electronic)},
   issn={0025-5718},
}

\bib{Speleers13}{article}{
   author={Speleers, Hendrik},
   title={Construction of normalized B-splines for a family of smooth spline
   spaces over Powell-Sabin triangulations},
   journal={Constr. Approx.},
   volume={37},
   date={2013},
   number={1},
   pages={41--72},
   issn={0176-4276},
}
\end{biblist}
\end{bibdiv}
\end{document}